 \let\mathscr\relax
\theoremstyle{definition}
\newtheorem{defin}{Definition}[section]
\theoremstyle{definition}
\theoremstyle{plain}
\newtheorem{theo}[defin]{Theorem}
\theoremstyle{plain}
\newtheorem{prop}[defin]{Proposition}
\theoremstyle{plain}
\newtheorem{lem}[defin]{Lemma}
\theoremstyle{plain}
\newtheorem{cor}[defin]{Corollary}
\theoremstyle{definition}
\newtheorem{rmk}[defin]{Remark}
\theoremstyle{definition}
\theoremstyle{definition}
\theoremstyle{plain}
\newtheorem{conj}[defin]{Conjecture}
\theoremstyle{definition}
\theoremstyle{definition}
\theoremstyle{plain}
\theoremstyle{plain}
\theoremstyle{plain}
\theoremstyle{definition}
\theoremstyle{plain}
\theoremstyle{definition}
\theoremstyle{definition}
\newtheorem*{defin*}{Definition}
\theoremstyle{definition}
\newtheorem*{ex*}{Example}
\theoremstyle{plain}
\newtheorem*{theo*}{Theorem}
\theoremstyle{plain}
\theoremstyle{plain}
\newtheorem*{conj*}{Conjecture}
\newtheorem*{prop*}{Proposition}
\theoremstyle{plain}
\newtheorem*{lem*}{Lemma}
\theoremstyle{plain}
\newtheorem*{cor*}{Corollary}
\theoremstyle{definition}
\newtheorem*{rmk*}{Remark}
\theoremstyle{definition}
\newtheorem*{exe*}{Exercise}
\theoremstyle{plain}
\newtheorem{theoA}{Theorem}
\theoremstyle{plain}
\theoremstyle{plain}
\theoremstyle{plain}
\theoremstyle{plain}
\newtheorem{corA}[theoA]{Corollary}
\numberwithin{equation}{section}
\def\thm@space@setup{%
  \thm@preskip=\parskip \thm@postskip=0pt
}
\setlist[enumerate]{label=(\roman*)}
\def\bl{{\rm bl}}
\def\h{{\rm ht}}
\def\irr{{\rm Irr}}
\def\aut{{\rm Aut}}
\def\n{{\mathbf{N}}} 
\def\c{{\mathbf{C}}} 
\def\z{{\mathbf{Z}}} 
\def\O{{\mathbf{O}}} 
\def\C{{\mathcal{C}}} 
\newcommand{\uset}[3][0ex]{%
  \mathrel{\mathop{#3}\limits_{
    \vbox to#1{\kern-7\ex@
    \hbox{$\scriptstyle#2$}\vss}}}}
\newcommand{\wh}[1]{\widehat{#1}}
\newcommand{\ws}[1][1.5]{
  \mathrel{\scalebox{#1}[1]{$\sim$}}
}
\newcommand{\iso}[1]{\ws_{#1}}
\newcommand{\isoc}[1]{\ws_{#1}^c}
\newcommand{\blocktheorem}[1]{%
  \csletcs{old#1}{#1}
  \csletcs{endold#1}{end#1}
  \RenewDocumentEnvironment{#1}{o}
    {\par\addvspace{1.5ex}
     \noindent\begin{minipage}{\textwidth}
     \IfNoValueTF{##1}
       {\csuse{old#1}}
       {\csuse{old#1}[##1]}}
    {\csuse{endold#1}
     \end{minipage}
     \par\addvspace{1.5ex}}
}
\theoremstyle{plain}
\newtheorem*{iAMconj*}{The inductive Alperin--McKay Conjecture}
\theoremstyle{plain}
\newtheorem*{CTC*}{The Character Triple Conjecture}
\def\blfootnote{\gdef\@thefnmark{}\@footnotetext}
\title{
{\huge\bf The Character Triple Conjecture for maximal defect characters and the prime $2$}\\
\author{\Large Damiano Rossi}
\blfootnote{\emph{$2010$ Mathematical Subject Classification:} $20$C$20$ ($20$C$15$)
\\
\emph{Key words and phrases:} Character Triple Conjecture, inductive Alperin--McKay condition, maximal defect characters, abelian defect groups
\\
\date{}
Large part of this work was carried out during a stay at the Mathematisches Forschungsinstitut Oberwolfach funded by a Leibniz Fellowship. I want to thank Gunter Malle for a careful reading of an earlier version of this paper and for providing useful comments.}}
\begin{document}

\renewcommand{\thetheoA}{\Alph{theoA}}

\renewcommand{\thecorA}{\Alph{corA}}

\selectlanguage{english}

\maketitle

\begin{abstract}
We prove that Sp\"ath's Character Triple Conjecture holds for every finite group with respect to maximal defect characters at the prime $2$. This is done by reducing the maximal defect case of the conjecture to the so-called inductive Alperin--McKay condition whose verification has recently been completed by Ruhstorfer for the prime $2$. As a consequence, we obtain the Character Triple Conjecture for all $2$-blocks with abelian defect groups by applying Brauer's Height Zero Conjecture, a proof of which is now available. We also obtain similar results for the block-free version of the Character Triple Conjecture at any prime $p$.
\end{abstract}

\section*{Introduction}

Based upon a large body of conjectural and computational evidence, the local-global principle in the representation theory of finite groups asserts that, given a prime number $p$ dividing the order of a finite group $G$, the representation theory of $G$ at the prime $p$ is largely determined by the $p$-local structure of the group. Here, the group $G$ plays the role of a global ambient and is opposed to the $p$-local structure which captures the embedding of the $p$-subgroups inside $G$. The questions arising in this context lead to some of the most important achievements in group representation theory of the past decades. Among others, we mention the proof of Brauer's Height Zero Conjecture from the $1950$s recently obtained in \cite{MNSFT}.

The conjectural evidence mentioned above consists of a series of statements that link different representation theoretic aspects of the group $G$ to its $p$-local structure. Apart from a few exceptions of a more structural flavour, all these statements can be ultimately reduced to proving the so-called Character Triple Conjecture for all quasi-simple groups. The latter, introduced by Sp\"ath in \cite{Spa17}, should be understood as the final result of an investigation initiated by Dade during the $1990$s that led to a sequence of increasingly stronger conjectures \cite{Dad92}, \cite{Dad94}, \cite{Dad97}. While relating global and local information through the notion of $p$-chains, an idea introduced by Robinson already in the $1980$s and subsequently exploited by Dade, Sp\"ath's conjecture provides a way to control fundamental cohomological and Clifford theoretical conditions that arise when considering the representation theoretical compatibility of normal group embeddings. This is achieved through the notion of $G$-block isomorphisms of character triples, hence the name of the conjecture. Given the technical nature of the Character Triple Conjecture, we refer the reader to Section \ref{sec:Preliminaries} for a precise definition.

The aim of this paper is to show that the Character Triple Conjecture holds at the prime $2$ for maximal defect characters. More precisely, we show that the conjecture holds for every Brauer $2$-block $B$ with respect to the non-negative integer $d=d(B)$ as specified in Remark \ref{rmk:CTC for max defect characters}.

\begin{theoA}
\label{thm:Main, CTC/DPC-max for p=2}
The Character Triple Conjecture holds for every Brauer $2$-block $B$ of a finite group with respect to the non-negative integer $d(B)$.
\end{theoA}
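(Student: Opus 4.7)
The plan is the two-step strategy suggested by the abstract: reduce the maximal defect case of the Character Triple Conjecture to the inductive Alperin--McKay (iAM) condition, and then invoke Ruhstorfer's verification of iAM at $p=2$ for all quasi-simple groups.

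For the first step, I would fix a $2$-block $B$ of a finite group $G$ with defect group $D$, and analyse the two sides of the CTC restricted to characters of defect $d(B)$. Both sides are alternating sums, indexed by $G$-orbits of radical $p$-chains, of character triples equipped with a chosen character of the given defect. The key observation is that for height zero (equivalently, maximal defect) characters, only chains of very restricted shape contribute. Concretely, I would apply a Kn\"orr--Robinson-type cancellation argument, of the kind used by Dade and Sp\"ath in related reductions of stronger conjectures to weaker ones, to show that after pairing chains by an involution on radical $p$-chains of length at least one, the only surviving chains are the trivial chain on the global side and the chain $1<D$ on the local side. The delicate point is that the cancellation has to be carried out not merely on characters but compatibly with the datum of a $G$-block isomorphism of character triples, which is the essential strengthening provided by Sp\"ath's formulation.

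Once this collapse is achieved, the content of CTC for maximal defect characters of $B$ becomes a height zero bijection between $\irr_0(B)$ and $\irr_0(b)$, where $b$ is the Brauer correspondent of $B$ in $\n_G(D)$, together with the compatibility data (stabilisers, central characters, projective representations on stabilisers) required for a $G$-block isomorphism of triples. This is precisely the Alperin--McKay conjecture in its strong, block-isomorphism version. The standard reduction machinery of Navarro--Sp\"ath and Sp\"ath then asserts that such a strong AM statement is implied by the iAM inductive condition being verified for every non-abelian simple group involved in the composition factors of $G$. Since Ruhstorfer has completed this verification at the prime $2$, the maximal defect case of CTC follows for every $2$-block~$B$.

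The main obstacle is the first step: making the chain cancellation argument run while preserving $G$-block isomorphism of character triples, and not just character counts or projective equivalence. The involution on chains must be $G$-equivariant, must respect the action of the ambient automorphism group used to define $G$-block isomorphism, and must send corresponding character triples to isomorphic ones in the sense of Sp\"ath. By contrast, once the problem has been phrased in iAM language, the invocation of Ruhstorfer's theorem is essentially formal, and the corollary for $2$-blocks with abelian defect groups noted in the abstract is then immediate via Brauer's Height Zero Conjecture, which in that case forces $\irr(B)=\irr_0(B)$ and so reduces the full CTC for $B$ to its maximal defect part.
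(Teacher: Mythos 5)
Your overall architecture is right, and your endgame matches the paper's: after the collapse, only the chains $\{\O_p(G)\}$ and $\{\O_p(G)<D\}$ survive, their contribution is exactly a height-zero bijection $\irr_0(B)\to\irr_0(b)$ compatible with $G$-block isomorphisms of character triples, and this is supplied by the Navarro--Sp\"ath reduction of the inductive Alperin--McKay condition together with Ruhstorfer's verification at $p=2$ (plus the other known cases listed in the paper). The gap is in your first step. You propose to dispose of all non-extreme chains by a Kn\"orr--Robinson-type parity-reversing involution on chains of positive length. No such involution does the job here: the standard pairing $\sigma\leftrightarrow\sigma\cup\{V\}$ with $V$ canonical in $G_\sigma$ (which preserves stabilisers and hence trivially preserves character triples) only reduces the problem to \emph{normal} chains --- a reduction the paper also uses --- but it leaves far more than the trivial chain and $\{U<D\}$, and there is no stabiliser-preserving involution that cancels everything else. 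You correctly flag this as the main obstacle, but you neither carry it out nor propose a mechanism that could actually work.

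What the paper does instead is an induction on $|G|$. Every contributing pair $(\sigma,\vartheta)$ is $G$-conjugate to one whose chain lies inside a fixed defect group $D$; if the second term $Q$ of $\sigma$ satisfies $U<Q<D$, then deleting the initial term $U$ produces a chain of $\n_G(Q)$ starting at $Q$ of \emph{opposite} parity, and Theorem \ref{thm:CTC for maximal defect follows from iAM} applied inductively to the proper subgroup $\n_G(Q)$ (for the blocks of $\n_G(Q)$ inducing $B$ with defect $d(B)$) yields the required sign-reversing equivariant bijection together with $\n_G(Q)$-block isomorphisms, which are then upgraded to $G$-block isomorphisms by the centraliser criterion of \cite[Lemma 2.11]{Ros22}. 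For this induction to close, one must prove a statement strictly stronger than Conjecture \ref{conj:CTC}: the initial term $U$ of the chains must be allowed to be an arbitrary normal $p$-subgroup of $G$, not a central one, since $Q$ need not be central in $\n_G(Q)$. Your proposal stays within the central-$\O_p(G)$ formulation and so cannot be bootstrapped this way. The missing ideas are therefore the strengthened induction statement and the decomposition of the chain set by ($G$-orbits of) second terms; once those are in place, the remainder of your argument, including the deduction of the abelian-defect corollary from Brauer's Height Zero Conjecture, goes through essentially as you describe.
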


As an immediate consequence of Theorem \ref{thm:Main, CTC/DPC-max for p=2}, and using the \textit{if part} of Brauer's Height Zero Conjecture \cite{Kes-Mal13}, we deduce that the Character Triple Conjecture holds for all Brauer $2$-blocks with abelian defect groups.

\begin{corA}
\label{cor:Main CTC for abelian 2-blocks}
The Character Triple Conjecture holds for every Brauer $2$-block with abelian defect groups.
\end{corA}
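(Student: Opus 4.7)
The plan is to deduce Corollary~\ref{cor:Main CTC for abelian 2-blocks} directly from Theorem~\ref{thm:Main, CTC/DPC-max for p=2} by invoking Brauer's Height Zero Conjecture. Fix a Brauer $2$-block $B$ of a finite group $G$ with abelian defect group $D$. Theorem~\ref{thm:Main, CTC/DPC-max for p=2} already supplies the Character Triple Conjecture for $B$ relative to the non-negative integer $d(B)$, i.e.\ for the maximal-defect characters of $B$ as specified in Remark~\ref{rmk:CTC for max defect characters}. Hence the only task is to verify that, under the abelian-defect hypothesis, this $d(B)$-version of the conjecture is already the full conjecture.

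For the global side, I would apply the \emph{if part} of Brauer's Height Zero Conjecture, proved by Kessar and Malle in \cite{Kes-Mal13}: every $\chi\in\irr(B)$ has height zero, equivalently defect $d(B)$. For the local side of the conjecture, one runs over $2$-chains $\sigma$ of $G$ and considers characters in Brauer correspondents of $B$ in the chain normaliser $N_G(\sigma)$; each such block has a defect group that is a subgroup of $D$, and is therefore itself abelian. Applying \cite{Kes-Mal13} block-by-block, every irreducible character involved on the local side is also of height zero, hence of maximal defect in its block. Thus both the global and local character sets appearing in the Character Triple Conjecture with respect to $d(B)$ coincide with the full ones, and Theorem~\ref{thm:Main, CTC/DPC-max for p=2} yields the Character Triple Conjecture for $B$ in its full form.

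The main obstacle is essentially bookkeeping: one must check that restricting to defect $d(B)$ throws away no character on either side of the conjecture, and this reduces to observing that subgroups of the abelian group $D$ are again abelian so that Brauer's Height Zero Conjecture is applicable to every block entering the local picture. Modulo this routine verification the corollary is a formal consequence of Theorem~\ref{thm:Main, CTC/DPC-max for p=2} together with \cite{Kes-Mal13}, with no further input required.
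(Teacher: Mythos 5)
Your first step is fine and matches the paper: Theorem~\ref{thm:Main, CTC/DPC-max for p=2} settles the case $f=d(B)$, and the remaining task is to deal with the other defects $f\neq d(B)$. The gap is in your claim that these remaining cases are vacuous. Being of height zero in a block $b$ of a chain stabiliser $G_\sigma$ with $b^G=B$ means $d(\vartheta)=d(b)$, not $d(\vartheta)=d(B)$: by \cite[Lemma 4.13]{Nav98} a defect group of $b$ is only \emph{contained} in one of $B$, so $d(b)\leq d(B)$ and the inequality can be strict for intermediate chains. Hence for $f<d(B)$ the sets $\C^f(B,U)_\pm$ are in general non-empty (they collect the characters of those Brauer correspondents $b$ with $d(b)=f$), and Brauer's Height Zero Conjecture does not make them disappear; it only tells you that each such $b$, having abelian defect groups, contributes all of $\irr(b)$ in the single degree-of-defect $d(b)$. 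So the corollary is not a purely formal consequence of Theorem~\ref{thm:Main, CTC/DPC-max for p=2} plus \cite{Kes-Mal13}: you still have to produce a bijection $\C^f(B,U)_+/G\to\C^f(B,U)_-/G$ for every $f\neq d(B)$.

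What the Height Zero Conjecture does buy you, and what the paper uses it for, is that no pair $(\sigma,\vartheta)\in\C^f(B,U)_\pm$ with $f\neq d(B)$ can have $\sigma$ equal to the trivial chain $\sigma_+=\{U\}$ or to $\sigma_-=\{U<D\}$, since $\irr(B)=\irr^{d}(B)$ and $\irr(b)=\irr^{d}(b)$ for the Brauer correspondent $b$ in $\n_G(D)$. Every such chain therefore has a second term $Q$ with $U<Q<D$ up to conjugacy, i.e.\ lies in the set $\mathcal{F}$ from the proof of Theorem~\ref{thm:CTC for maximal defect follows from iAM}. The paper then runs the chain-splicing machinery of Lemma~\ref{lem:CTC for maximal defect follows from iAM, 1}, Corollary~\ref{cor:CTC for maximal defect follows from iAM, 2} and Proposition~\ref{prop:CTC for maximal defect follows from iAM, 3} to reduce the $f$-version for $B$ to the full Character Triple Conjecture for the blocks $b$ of the proper subgroups $\n_G(Q)$ with $b^G=B$; these again have abelian defect groups, so an induction on $|G|$ closes the argument. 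You need to add this induction step (or an equivalent one) to make your proof complete.
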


The proofs of Theorem \ref{thm:Main, CTC/DPC-max for p=2} and Corollary \ref{cor:Main CTC for abelian 2-blocks} rely on the verification of the inductive Alperin--McKay condition introduced in \cite[Definition 7.2]{Spa13II} for the prime $2$ that was recently completed by Ruhstorfer in \cite{Ruh22AM}. In order to make use of this result, we prove a reduction theorem that shows the maximal defect case of the Character Triple Conjecture can be reduced to the verification of the inductive Alperin--McKay condition for all (covering groups of) non-abelian finite simple groups. In this paper, we will use the reformulation of this condition given in Conjecture \ref{conj:iAM} below. We can then state our reduction theorem as follows.

\begin{theoA}
\label{thm:Main, reduction for CTC-max}
Let $G$ be a finite group and $p$ a prime number. If every covering group of a non-abelian finite simple group involved in $G$ satisfies the inductive Alperin--McKay condition at the prime $p$, then the Character Triple Conjecture holds for every Brauer $p$-block $B$ of the group $G$ with respect to the non-negative integer $d(B)$.
\end{theoA}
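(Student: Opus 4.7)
The plan is to prove Theorem \ref{thm:Main, reduction for CTC-max} by induction on $|G|$, following the general template of Sp\"ath's reduction theorems while exploiting the collapse of the Character Triple Conjecture to an Alperin--McKay-type statement in the maximal defect case.

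The first step is to reduce the content of the CTC for $B$ at defect $d(B)$ to a pairing that only involves the ``extreme'' chains of the normal subgroup chain poset. Since any height-zero character of $B$ arises as an irreducible constituent of an induced character from $N_G(D)$ where $D$ is a defect group of $B$, a standard alternating-sum cancellation on the Robinson poset shows that the $N_G(\sigma)$-equivariant bijection between height-zero characters above positive and negative chains, compatible with $G$-block isomorphism of character triples, reduces in the maximal defect case to producing an $N_G(D)$-equivariant bijection between $\irr_0(B)$ and $\irr_0(b)$, where $b$ is the Brauer correspondent of $B$ in $N_G(D)$, together with the $\isoc{G}$ compatibility at the level of individual characters. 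This is the shape of an Alperin--McKay bijection with the added character triple data captured by Conjecture \ref{conj:iAM}.

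The second step is to reduce this refined AM-type statement for arbitrary $(G,B)$ to the quasi-simple level. Consider a minimal counterexample $(G,B)$. Pick a minimal normal subgroup $N \trianglelefteq G$. If $N$ is a $p'$-group or more generally lies in the kernel of the relevant characters, the Fong--Reynolds correspondence together with Dade's covering theory allows us to transport the required data from a proper quotient to which the inductive hypothesis applies. Otherwise, $N$ is a non-abelian characteristic simple subgroup of the form $N = S_1 \times \cdots \times S_n$ with each $S_i \cong S$ a non-abelian simple group involved in $G$. Here, Sp\"ath's butterfly theorem for the $\isoc{G}$-relation together with standard Clifford-theoretic descent identifies the missing CTC datum for $(G,B)$ with a piece of data on a covering group $\wt{S}$ of $S$: namely, an $\aut(\wt{S})_b$-equivariant AM bijection enriched with the central extension and character triple compatibility prescribed by the inductive Alperin--McKay condition.

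The third step is then to invoke the hypothesis: since every covering group of $S$ satisfies iAM at the prime $p$, the required datum at the level of $\wt{S}$ exists, and combining it with the butterfly reduction produces the bijection for $(G,B)$, closing the induction. The main obstacle is the careful verification that the AM compatibility built into the iAM at the quasi-simple level, expressed in terms of central extensions and $\aut(\wt{S})_b$-equivariance, propagates correctly through the Clifford reduction to yield the $N_G(\sigma)$-equivariant, $\isoc{G}$-compatible bijection demanded by the CTC; this involves a detailed bookkeeping of chains, normalisers and block covers, and is where most of the technical effort of the proof lies.
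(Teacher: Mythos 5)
Your global architecture matches the paper's: induction on $|G|$, a separation of the chains into the two extreme ones $\{U\}$ and $\{U<D\}$ (whose characters give $\irr_0(B)$ and $\irr_0(b)$) versus the intermediate ones, and an appeal to the inductive Alperin--McKay condition for the extreme part. For that last part the paper does not redo the Clifford-theoretic descent to quasi-simple groups that you sketch in your second and third steps: it simply invokes the Navarro--Sp\"ath reduction theorem for the iAM condition, which already says that the hypothesis on involved simple groups yields the enriched AM bijection with $G$-block isomorphisms for $G\unlhd \n_A(U)_B$. Citing that theorem is both shorter and safer than re-proving it in a paragraph.

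The genuine gap is in your first step. The Character Triple Conjecture is not an alternating-sum identity, so ``a standard alternating-sum cancellation on the Robinson poset'' does not dispose of the intermediate chains: you must exhibit an explicit $\n_A(U)_B$-equivariant bijection between the positive and negative intermediate chains that carries $G$-block isomorphisms of character triples. The paper produces it by sorting chains according to their second term $Q$ with $U<Q<D$, flipping parity by deleting $U$ from the chain, and applying the theorem inductively to $Q\unlhd \n_G(Q)\unlhd \n_A(Q)$. This forces two ingredients absent from your proposal: (i) the statement being induced on must be strengthened so that the base $p$-subgroup of the chains is an arbitrary normal $p$-subgroup, not a central one as in the conjecture itself --- otherwise the induction does not apply to $\n_G(Q)$, since $Q$ is in general not central there (this is exactly why the paper proves Theorem 2.1 rather than the conjecture directly); and (ii) the resulting $\n_G(Q)$-block isomorphisms must be upgraded to $G$-block isomorphisms, which requires a centraliser-of-defect-group criterion and is not automatic. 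Finally, note that the blockwise conjecture demands the relation $\iso{G}$, not the weaker central isomorphism $\isoc{G}$ that you use throughout; the latter only suffices for the block-free variant.
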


While the above theorem appears to be new in nature, the reverse implication should be expected (at least among the experts in this research area). In fact, as mentioned above the Character Triple Conjecture implies most of the so-called local-global conjectures. Theorem \ref{thm:CTC-max implies iAM} below shows that the maximal defect case of the Character Triple Conjecture implies the inductive Alperin--McKay condition (as stated in Conjecture \ref{conj:iAM}). As a consequence, we deduce that these two statements are in fact logically equivalent.

The arguments used to prove the above results can be adapted to obtain analogous block-free statements. In particular, using the solution of the McKay Conjecture by Cabanes and Sp\"ath \cite{Cab-Spa-McK}, we are able to show that the block-free form of the Character Triple Conjecture holds at any prime $p$ for characters of degree coprime to $p$ (see Theorem \ref{thm:CTC block-free p=3}) and for every finite group with abelian Sylow $p$-subgroups (see Corollary \ref{cor:CTC block-free, abelian Sylow 3}). This will follow from a reduction of the block-free form of the Character Triple Conjecture to the verification of the inductive McKay condition for (the universal covering group of) non-abelian finite simple groups (see Theorem \ref{thm:CTC non-blockwise for maximal defect follows from iMcK}).

The paper is organised as follows: In Section \ref{sec:Preliminaries} we collect some background material and state the Character Triple Conjecture and the inductive Alperin--McKay condition. Section \ref{sec:iAM implies CTC} is devoted to the proof of Theorem \ref{thm:Main, reduction for CTC-max}. This is then used in Section \ref{sec:Theorem A ad Corollary B} in order to obtain Theorem \ref{thm:Main, CTC/DPC-max for p=2} and Corollary \ref{cor:Main CTC for abelian 2-blocks}. In Section \ref{sec:Converse, CTC implies iAM} we prove Theorem \ref{thm:CTC-max implies iAM}, a converse to Theorem \ref{thm:Main, reduction for CTC-max}. We conclude by sketching the proofs of the block-free analogues of all these results in Section \ref{sec:Block-free}

\section{Preliminaries and notation}
\label{sec:Preliminaries}

In this section, we collect some basic definitions and the statements of the conjectures considered below. Throughout this paper we freely use basic results from the representation theory of finite groups that can be found in standard texts such as \cite{Nag-Tsu89}, \cite{Nav98} but also the more recent \cite{Lin18I}, \cite{Lin18II}. We denote by $\irr(G)$ the set of complex valued irreducible characters of a finite group $G$. If $G$ is a normal subgroup of a larger group $A$ and $\chi$ is an irreducible character of $G$ fixed by the conjugacy action of $A$, then we say that $(A,G,\chi)$ is a character triple. We assume that the reader is familiar with the notion of $G$-block isomorphism, an equivalence relation on the set of character triples introduced in \cite[Definition 3.6]{Spa17} and denoted by $\iso{G}$.

For every prime number $p$, the set $\irr(G)$ admits a partition into the so-called Brauer $p$-blocks of $G$. Given a $p$-block $B$ of $G$, we denote by $\irr(B)$ the set of irreducible characters belonging to $B$. Conversely, given an irreducible character $\chi$ of $G$, we denote by $\bl(\chi)$ the unique $p$-block containing $\chi$. We will often suppress the $p$ from $p$-block and simply refer to $B$ as a block of $G$. Next, recall that for every $\chi\in\irr(G)$, the degree $\chi(1)$ of $\chi$ divides the order of $G$. We define the $p$-defect (or simply the defect) of $\chi$ as the non-negative integer $d(\chi)$ such that $p^{d(\chi)}=|G|_p/\chi(1)_p$ and where for every $n\geq 1$ we denote by $n_p$ the largest power of $p$ that divides $n$. If $d$ is a fixed non-negative integer, then $\irr^d(G)$ is the set of irreducible characters of $G$ of defect $d$ while, for a block $B$, we denote by $\irr^d(B)$ the intersection of $\irr^d(G)$ and $\irr(B)$. Next, to each block $B$ is associated a $G$-conjugacy class of $p$-subgroups $D$ of $G$ called the defect groups of $B$. If $|D|=p^m$, then we call $d(B):=m$ the defect of $B$. It is well known that $d(B)$ coincides with the maximum $d(\chi)$ for $\chi\in\irr(B)$. In particular, it follows that a character $\chi$ is of maximal defect in its block $B$ if and only if it is of height zero. Here the height of $\chi$ is defined as $\h(\chi):=d(B)-d(\chi)$ and for every $h\geq 0$ we denote by $\irr_h(B)$ the set of irreducible characters $\chi$ belonging to $B$ and with height $\h(\chi)=h$.

In order to state the Character Triple Conjecture, we need to introduce some more notation on $p$-chains. We refer here to \cite{Spa17} and \cite{Ros22}. Let $Z$ be a normal $p$-subgroup of $G$ and denote by $\mathfrak{N}(G,Z)$ the set of $p$-chains of $G$ starting with $Z$, that is the set of chains $\sigma=\{D_0=Z<D_1<\dots<D_n=D(\sigma)\}$ of $p$-subgroups $D_i$ of $G$ and where $D_0=Z$ and we denote by $D(\sigma)$ the final term of $\sigma$. The length of $\sigma$ is the number $|\sigma|=n$ of terms strictly containing $Z$. The reason for this convention stems from the fact that this definition of length coincides with the notion of dimension of $\sigma$ when viewed as a simplex (see, for instance, \cite[Section 1.1]{Ros-Homotopy}). We then obtain a partition of $\mathfrak{N}(G,Z)$ into the sets $\mathfrak{N}(G,Z)_\pm$ of $p$-chains $\sigma$ satisfying $(-1)^{|\sigma|}=\pm 1$. Since $Z$ is normal in $G$, the group $G$ acts by conjugation on $\mathfrak{N}(G,Z)$, by conjugating simultaneously each term of a $p$-chain $\sigma$, and we denote by $G_\sigma$ the stabiliser in $G$ of the chain $\sigma$, i.e. the intersection of the normalisers $\n_G(D_i)$ for each term $D_i$ of $\sigma$. Now, given a block $B$ of $G$ and a non-negative integer $d$, define $\C^d(B,Z)_\pm$ to be the set of pairs $(\sigma,\vartheta)$ where $\sigma$ is a $p$-chain belonging to $\mathfrak{N}(G,Z)_\pm$ and $\vartheta$ is an irreducible character of the stabiliser $G_\sigma$ with defect $d(\vartheta)=d$ and satisfying $\bl(\vartheta)^G=B$. Here, for a block $b$ of a subgroup $H$ of $G$, we denote by $b^G$ the block of $G$ obtained via Brauer induction whenever it is defined \cite[Section 4]{Nav98}. The set of such characters is often denoted by $\irr^d(B_\sigma)$. Since the action of $G$ fixes $B$ and $Z$, the group $G$ acts on $\C^d(B,Z)_\pm$. We denote by $\overline{(\sigma,\vartheta)}$ the $G$-orbit of $(\sigma,\vartheta)\in\C^d(B,Z)_\pm$ and by $\C^d(B,Z)_\pm/G$ the corresponding set of $G$-orbits. We can now state the Character Triple Conjecture in the form introduced by Sp\"ath in \cite[Conjecture 6.3]{Spa17}.

\begin{conj}[Character Triple Conjecture]
\label{conj:CTC}
Let $G\unlhd A$ be finite groups, $p$ a prime number, and assume that $\O_p(G)$, the largest normal $p$-subgroup of $G$, is contained in the centre of $G$. Then, for every $p$-block $B$ of $G$ with non-central defect groups and every non-negative integer $d$, there exists an $A_B$-equivariant bijection
\[\Omega:\C^d(B,\O_p(G))_+/G\to\C^d(B,\O_p(G))_-/G\]
such that
\[\left(A_{\sigma,\vartheta},G_\sigma,\vartheta\right)\iso{G}\left(A_{\rho,\chi},G_\rho,\chi\right)\]
for every $(\sigma,\vartheta)\in\C^d(B,\O_p(G))_+$ and $(\rho,\chi)\in\Omega(\overline{(\sigma,\vartheta)})$.
\end{conj}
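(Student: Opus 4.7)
The plan is to reduce the Character Triple Conjecture to an inductive statement on quasi-simple groups and then verify that statement for each family of finite simple groups. This mirrors the strategy that has succeeded for the McKay, Alperin--McKay, Alperin weight and Brauer height zero conjectures, and indeed the programme of the present paper already carries out such a reduction in the maximal-defect case.

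First, I would formulate an \emph{inductive CTC condition} on a quasi-simple group $S$ that captures, for every block $B$ of the universal covering group $\wh S$ and every defect $d$, an $\out(S)_B$-equivariant bijection between the $\pm$-halves of $\C^d(B,\O_p(\wh S))$ lifting to a $G$-block isomorphism of character triples once $\wh S$ is embedded as a component inside a larger overgroup. Second, I would prove a reduction theorem: if every non-abelian simple group involved in $G$ satisfies this inductive CTC condition at $p$, then $G$ satisfies Conjecture \ref{conj:CTC}. The argument would proceed by induction on $|G/\O_p(G)\z(G)|$. When $G$ admits a proper normal subgroup $N$ strictly containing $\O_p(G)\z(G)$, one uses Clifford theory together with a chain-level analogue of Sp\"ath's butterfly and projective-representation arguments to propagate $G$-block isomorphism through central products and quotients, descending to stabilisers of characters of $N$; when no such $N$ exists, $G$ is quasi-simple modulo its centre and the inductive condition on the corresponding simple group applies directly.

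Third, I would verify the inductive CTC condition for each quasi-simple group $S$. Alternating, sporadic and Tits groups can be handled by direct computation with character tables and block-theoretic data. Groups of Lie type in their defining characteristic are expected to follow from the very restricted block structure there. The hardest case is groups of Lie type in non-defining characteristic: one must combine Deligne--Lusztig theory, the Bonnaf\'e--Dat--Rouquier Jordan decomposition for blocks, and detailed knowledge of $e$-Harish-Chandra series for both ordinary and modular representations, while simultaneously controlling the action of $\out(S)$ and Galois automorphisms. The main obstacle will be precisely this last family: unlike in the maximal-defect case, no reduction to a McKay-type counting condition is available, one must work blockwise at every defect $d$, handle quasi-isolated and non-unipotent blocks with their intricate module-theoretic structure, and maintain the cohomological rigidity encoded by $G$-block isomorphism of character triples, which historically has been the most delicate ingredient in any inductive condition of this flavour.
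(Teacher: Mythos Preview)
The statement you are addressing is Conjecture~\ref{conj:CTC}, the Character Triple Conjecture itself. This is an \emph{open conjecture}; the paper does not prove it and therefore contains no proof to compare your proposal against. What the paper establishes are special cases: Theorem~\ref{thm:Main, CTC/DPC-max for p=2} (the maximal-defect case $d=d(B)$ at the prime $2$, obtained by reducing to the inductive Alperin--McKay condition via Theorem~\ref{thm:Main, reduction for CTC-max} and then invoking Ruhstorfer's verification) and Corollary~\ref{cor:Main CTC for abelian 2-blocks} (abelian defect $2$-blocks, via Brauer's Height Zero Conjecture). Nowhere does the paper claim Conjecture~\ref{conj:CTC} in full.

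Your proposal is a coherent outline of the natural programme for attacking the full conjecture, and indeed the reduction step you describe is essentially what the paper cites as having been carried out in \cite{Ros-Reduction_CTC}. But the verification step --- establishing an inductive CTC condition for every quasi-simple group, every block, and every defect $d$ --- is precisely the part that remains open. Your own final paragraph correctly identifies the main obstacle (groups of Lie type in non-defining characteristic at arbitrary defect, with no McKay-type shortcut available), which is to say that you have sketched a programme rather than a proof. The gap is not a flaw in your reasoning; it is that the statement is a conjecture, and the missing ingredient you flag is genuinely missing from the literature.
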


\begin{rmk}
\label{rmk:CTC for max defect characters}
We say that the Character Triple Conjecture holds at the prime $p$ for maximal defect characters, if Conjecture \ref{conj:CTC} holds at the prime $p$ for every $p$-block $B$ of a finite group and with respect to $d=d(B)$.
\end{rmk}

Observe that the $G$-block isomorphism of character triples considered in the statement above does not depend on the choice of representatives $(\sigma,\vartheta)$ and $(\rho,\chi)$ in the corresponding $G$-orbits thanks to \cite[Lemma 3.8 (c)]{Spa17}. Moreover, notice that the assumption on $\O_p(G)$ is not restrictive. In fact, we could replace $\O_p(G)$ with any central $p$-subgroup $Z$ of $G$ and consider blocks $B$ with defect groups strictly containing $Z$ (see \cite[Conjecture 2.2]{Ros22}). However, in this case $Z\leq \O_p(G)$ and the result follows trivially whenever $Z\neq \O_p(G)$ thanks to a well-known contractibility argument due to Quillen \cite[Lemma 2.3]{Ros22}. In Section \ref{sec:iAM implies CTC} we will consider the case where $Z$ is not required to be contained in the centre of $G$. The equivalence of this latter form with Conjecture \ref{conj:CTC} above is however not immediate to prove (this will appear in a future work of the author \cite{Ros-Reduction_CTC}).

\begin{rmk}
In some of the arguments given in Section \ref{sec:iAM implies CTC} it will be useful to consider normal $p$-chains. A $p$-chain $\sigma$ is said to be normal if each term $D_i$ is normal in the largest term $D(\sigma)$. Proceeding as in the proof of \cite[Proposition 6.10]{Spa17}, and following previous ideas introduced by Kn\"orr and Robinson (see \cite[Proposition 3.3]{Kno-Rob89}), it follows that when dealing with Conjecture \ref{conj:CTC} it is no loss of generality to assume that each $p$-chain considered in the definition of $\C^d(B,\O_p(G))_\pm$ is normal. For these reasons, we will keep using normal $p$-chains throughout the rest of the paper without further reference. This approach was also used in \cite{Ros22} without any comment.
\end{rmk}

We recall that Conjecture \ref{conj:CTC} implies Dade's Extended Projective Conjecture \cite[4.10]{Dad97} according to \cite[Proposition 6.4]{Spa17} and, as mentioned already in the introduction, that it should be understood as an analogue of the final Dade's Inductive Conjecture \cite[5.8]{Dad97}. In fact, it was announced long ago that the latter would reduce to quasi-simple groups although a proof of this result has not yet appeared. Nevertheless, it was shown in \cite[Theorem 1.3]{Spa17} that if the Character Triple Conjecture holds for quasi-simple groups then the weaker Dade's Projective Conjecture holds for every finite group. A final reduction of the Character Triple Conjecture to quasi-simple groups has recently been completed in \cite{Ros-Reduction_CTC}. Regarding the state of the art of the Character Triple Conjecture, we refer the reader to \cite[Section 9]{Spa17} for the case of sporadic groups, special linear groups of degree $2$, and blocks with cyclic defect, to \cite{Ros22} for the case of $p$-solvable groups, and to the series of papers \cite{Ros-Generalized_HC_theory_for_Dade}, \cite{Ros-Clifford_automorphisms_HC}, \cite{Ros-Unip}, and \cite{Ros-Homotopy} for the case of finite simple groups of Lie type in non-defining characteristic.

Next, we consider the inductive Alperin--McKay condition. In its most popular form, this condition is stated for simple groups and their covering groups (see \cite[Definition 7.2]{Spa13II}). Nevertheless, this condition can be stated for every finite group. In this paper, we consider the following form in which the cohomological and Clifford theoretic requirements are reformulated in terms of $G$-block isomorphisms of character triples.

\begin{conj}[inductive Alperin-McKay condition]
\label{conj:iAM}
Let $G\unlhd A$ be finite groups, $p$ a prime number, and consider a $p$-block $B$ of $G$ with defect group $D$ and Brauer correspondent $b$ in $\n_G(D)$. Then there exists an $\n_A(D)_B$-equivariant bijection
\[\Theta:\irr_0(B)\to\irr_0(b)\]
such that
\[\left(A_\vartheta,G,\chi\right)\iso{G}\left(\n_A(D)_\vartheta, \n_G(D),\Theta(\vartheta)\right),\]
for every $\vartheta\in\irr_0(B)$.
\end{conj}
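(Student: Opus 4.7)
The plan is to prove Conjecture \ref{conj:iAM} for an arbitrary pair $G \unlhd A$ by reduction to the case where $G$ is a covering group of a non-abelian finite simple group, and then invoking the classical inductive Alperin--McKay condition \cite[Definition 7.2]{Spa13II} for those covering groups. In the quasi-simple case, the formulation using $G$-block isomorphisms of character triples is equivalent to the more traditional cohomological-plus-Clifford formulation by the standard manipulations of \cite[Section 3]{Spa17}, so the real task is to show that the relation $\iso{G}$ is compatible with the Clifford-theoretic reduction used to pass from a general $G$ to its simple constituents.

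I would proceed by induction on $|G|$. The base case handles the situation when $G$ is a $p'$-group, where the only block of interest has defect zero and $\Theta$ may be taken to be the identity, and the situation when $G$ is essentially quasi-simple, where the assumed iAM condition provides the bijection together with the required compatibility data. For the inductive step, pick a proper non-trivial normal subgroup $N$ of $G$ and apply Fong--Reynolds reduction: each $\vartheta \in \irr_0(B)$ is controlled by its restriction to $N$, so the problem descends to pairs of smaller order to which the inductive hypothesis applies. The resulting partial bijections are then glued along the $A$-orbits of $N$-characters in order to assemble the desired bijection $\Theta$, and a parallel argument on the normaliser side is carried out through the Brauer correspondence.

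The hard part will be ensuring that the $G$-block isomorphism relation is preserved under these Clifford-theoretic constructions. This requires a careful application of the transitivity and functoriality properties of $\iso{G}$ established in \cite[Lemma 3.8 and Theorem 3.14]{Spa17}, together with block-theoretic control of the Brauer correspondence through the successive quotients, and the tracking of equivariance with respect to $\n_A(D)_B$ at each stage. A second, more serious obstacle is the verification at the quasi-simple level itself, which remains open in general; it is available for $p = 2$ by \cite{Ruh22AM}, and it is precisely this input that the present paper combines with its reduction theorem to deduce the Character Triple Conjecture for maximal defect characters.
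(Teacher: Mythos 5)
The statement you are asked to prove is labelled as a \emph{conjecture} in the paper, and the paper offers no proof of it: Conjecture \ref{conj:iAM} is the inductive Alperin--McKay condition itself, which is used as a \emph{hypothesis} in Theorem \ref{thm:Main, reduction for CTC-max} and is known only in special cases. The paper's only remarks about its status are (i) that it reduces to quasi-simple groups by \cite[Theorem 7.1]{Nav-Spa14I}, and (ii) that the quasi-simple verification has been completed for $p=2$ by Ruhstorfer and others (see the proof of Theorem \ref{thm:Main, CTC/DPC-max for p=2} for the list of references). Your proposal is, in effect, a sketch of the Navarro--Sp\"ath reduction followed by an appeal to the quasi-simple case, and you correctly acknowledge at the end that the latter is open for odd primes. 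So there is no complete proof here, and there cannot be one with current knowledge: what you have outlined establishes the conjecture only for $p=2$, and even then only by citing two substantial external results rather than proving anything.

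Beyond the unavoidable gap at the quasi-simple level, the reduction step itself is described far too loosely to count as a proof. The actual argument of \cite[Theorem 7.1]{Nav-Spa14I} is not a straightforward induction via Fong--Reynolds over an arbitrary proper normal subgroup $N$: one must treat separately the cases where the characters under consideration restrict reducibly or irreducibly to the layer, construct character triple isomorphisms above non-$A$-invariant constituents, control the Brauer correspondence and defect groups through the relevant quotients and central extensions, and verify the order-theoretic and group-theoretic conditions (such as $\c_{GA_\vartheta}(D)\leq \n_{GA_\vartheta}(D)$) that make the $\iso{G}$ relation descend and glue. None of this is supplied by ``careful application of transitivity and functoriality.'' If your goal is to document why the conjecture holds for $p=2$, the honest formulation is a two-line citation of \cite[Theorem 7.1]{Nav-Spa14I} together with the verifications listed in the proof of Theorem \ref{thm:Main, CTC/DPC-max for p=2}; if your goal is to prove the conjecture in general, the proposal does not and cannot close it.
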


Observe that the condition on character triples in Conjecture \ref{conj:iAM} could equivalently be stated by using the relation $\geq_b$ considered in \cite{Spa18}. Moreover, we point out that, arguing as in the proof of \cite[Proposition 6.8]{Spa17}, it follows that the inductive Alperin--McKay condition from \cite[Definition 4.12]{Spa18} holds for the universal covering group $X$ of a non-abelian simple group $S$ if and only if Conjecture \ref{conj:iAM} holds for every quasi-simple group $Y$ covering $S$ with respect to $Y\unlhd Y\rtimes \aut(Y)$. Then, \cite[Theorem C]{Spa13II} can be restated by saying that if Conjecture \ref{conj:iAM} holds for every quasi-simple group, then the Alperin--McKay Conjecture holds for every finite group. Finally, a much stronger version of this reduction theorem was obtained in \cite[Theorem 7.1]{Nav-Spa14I} where the authors showed that Conjecture \ref{conj:iAM} reduces to quasi-simple groups.

\section{Proof of Theorem \ref{thm:Main, reduction for CTC-max}}
\label{sec:iAM implies CTC}

In order to prove Theorem \ref{thm:Main, reduction for CTC-max}, we need the following slightly stronger statement in which we allow the $p$-subgroup $Z$ from Conjecture \ref{conj:CTC} to be non-central. Recall that a group $S$ is said to be involved in $G$ if there exist subgroups $K\unlhd H\leq G$ such that $S$ is isomorphic to $H/K$.

\begin{theo}
\label{thm:CTC for maximal defect follows from iAM}
Let $G$ be a finite group, consider a prime $p$, and suppose that the inductive Alperin-McKay condition (as stated in Conjecture \ref{conj:iAM}) holds at the prime $p$ for every covering group of a non-abelian finite simple group involved in $G$. Let $G\unlhd A$ and $U\unlhd G$ a $p$-subgroup of order $|U|=p^m$. Then, for every $p$-block $B$ of $G$ with defect $d:=d(B)>m$ there exists an $\n_A(U)_B$-equivariant bijection
\[\Omega_{B,U}:\C^d(B,U)_+/G\to \C^d(B,U)_-/G\]
such that
\[\left(A_{\sigma,\vartheta},G_\sigma,\vartheta\right)\iso{G}\left(A_{\rho,\chi},G_\rho,\chi\right)\]
for every $(\sigma,\vartheta)\in\C^d(B,U)_+$ and $(\rho,\chi)\in\Omega_{B,U}(\overline{(\sigma,\vartheta)})$.
\end{theo}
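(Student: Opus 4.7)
My plan is to argue by induction on $|G|$, strengthening the statement so that the inductive hypothesis provides the same conclusion for every proper section of $G$, and, at the same $G$, for every normal $p$-subgroup of $G$ strictly containing $U$. Since the hypothesis on involved simple groups passes to sections, this inductive setup is consistent. The key external input is the reduction theorem of Navarro and Sp\"ath \cite{Nav-Spa14I}, which, combined with our assumption, shows that Conjecture~\ref{conj:iAM} holds for every block of every section of $G$ that arises in the argument; in particular, it is available for the block $B$ itself and for every block $\bl(\vartheta)$ that appears inside a stabiliser $G_\sigma$.

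The construction of the bijection $\Omega_{B,U}$ proceeds by partitioning $\C^d(B,U)_\pm$ according to the position of $D(\sigma)$ inside a defect group of $\bl(\vartheta)$. Since $d(\vartheta)=d(B)$, Kn\"orr's theorem together with the Brauer correspondence force a defect group $D_\sigma$ of $\bl(\vartheta)$ to be a defect group of $B$ containing $D(\sigma)$. I would split $\C^d(B,U)_\pm$ into
\begin{enumerate}
\item pairs $(\sigma,\vartheta)$ with $D(\sigma)\subsetneq D_\sigma$, which admit a canonical parity-reversing pairing obtained by appending a minimal $D_\sigma$-normal overgroup $E$ of $D(\sigma)$ inside $D_\sigma$ and by lifting $\vartheta$ via the inductive hypothesis applied to $\n_G(E)$ with respect to the strictly larger normal $p$-subgroup $E$;
\item pairs $(\sigma,\vartheta)$ with $D(\sigma)=D_\sigma$, for which the block $\bl(\vartheta)$ of $G_\sigma$ has normal defect group; I then apply Conjecture~\ref{conj:iAM} to $\bl(\vartheta)$, and a further Kn\"orr--Robinson-style cancellation based on the normality of $D_\sigma$ in $G_\sigma$, together with the inductive hypothesis applied to $\n_G(D)$ for a fixed defect group $D$ of $B$, pairs the remaining chains.
\end{enumerate}
These partial bijections glue into the sought $\Omega_{B,U}$ by the transitivity of $\iso{G}$, which allows the chained composition of the character triple isomorphisms produced at each step.

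The main obstacle will be to verify that the resulting bijection is simultaneously $\n_A(U)_B$-equivariant and compatible with $G$-block isomorphism of character triples. Equivariance requires the auxiliary choices (the appended subgroup $E$ in the first piece and the defect group $D$ in the second) to be truly canonical along $A$-orbits; this is standard but needs the construction to be set up orbit-by-orbit and to rely on the functoriality of $\iso{G}$ under conjugation \cite[Section~3]{Spa17}. The compatibility with $\iso{G}$ is the more delicate point: the isomorphism produced by Conjecture~\ref{conj:iAM} on the second piece lives inside $G_\sigma$, whereas the theorem asks for an isomorphism involving the larger ambient group $A_{\sigma,\vartheta}$. Lifting it back requires a careful argument combining the stabiliser inclusions $A_{\sigma,\vartheta}\leq A_\sigma\leq \n_A(U)$ with the reformulation of $\iso{G}$ in terms of the relation $\geq_b$ of \cite{Spa18}, and this bookkeeping is where most of the technical effort should go.
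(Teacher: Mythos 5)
Your overall skeleton --- induction on $|G|$, invoking the reduction theorem \cite[Theorem 7.1]{Nav-Spa14I} so that Conjecture \ref{conj:iAM} becomes available for $B$ and for the blocks arising in chain stabilisers, followed by a parity-reversing pairing of chains and careful equivariance/$\iso{G}$ bookkeeping --- is the right shape, and the two difficulties you flag at the end (canonicity of auxiliary choices along orbits, upgrading an isomorphism over $G_\sigma$ or $\n_G(Q)$ to one over $G$) are genuine; the paper handles the latter with \cite[Lemma 2.11]{Ros22}. The gap is in the combinatorial heart of your argument: the pairing in item (i) is not a bijection. The rule ``append a minimal $D_\sigma$-normal overgroup $E$ of $D(\sigma)$'' only ever lengthens chains; it maps $\C^d(B,U)_+$ into $\C^d(B,U)_-$ but has no inverse branch. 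For instance the trivial chain $\{U\}$ lies in your piece (i) (since $U<D$ because $d(B)>m$) yet is in the image of nothing, and a chain ending in $E$ with $E\subsetneq D_\sigma$ is again in piece (i) and is pushed further along rather than sent back. A workable last-term pairing must be an involution: delete $D(\sigma)$ when it is a defect group of $B$, and append a \emph{full} defect group of $\bl(\vartheta)$ when it is not --- this is exactly the map $\wh{\Pi}$ of Proposition \ref{prop:Bijection Pi}, and even there the chains $\{U\}$ and $\{U<D\}$ must be excluded and matched separately, because the character correspondence accompanying the append/delete step is the Alperin--McKay bijection for $\bl(\vartheta)$ relative to $\n_{G_\sigma}(D_\sigma)$ inside $G_\sigma$, which requires $G_\sigma<G$ to be obtained from minimality. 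Relatedly, ``lifting $\vartheta$ via the inductive hypothesis applied to $\n_G(E)$'' is a category error: the inductive hypothesis yields bijections between sets of chain--character pairs, not a defect-preserving correspondence from $\irr(\bl(\vartheta))$ to characters of $G_\sigma\cap\n_G(E)$. Item (ii) is also vacuous as written, since a block with normal defect group is its own Brauer correspondent.

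For comparison, the paper sidesteps these issues by partitioning according to the \emph{second} term of the chain rather than the last. After reducing to $U=\O_p(G)$, a chain with second term $Q$ satisfying $U<Q<D$ has all terms in $\n_G(Q)<G$, so deleting $U$ gives a parity-flipping bijection with chains of $\n_G(Q)$ starting at $Q$, and the induction hypothesis applied to $\n_G(Q)$ supplies everything (Lemma \ref{lem:CTC for maximal defect follows from iAM, 1} through Proposition \ref{prop:CTC for maximal defect follows from iAM, 3}); only $\{U\}$ and $\{U<D\}$ remain, and these are matched by the iAM bijection $\irr_0(B)\to\irr_0(C)$. If you wish to keep a last-term decomposition, model it on Proposition \ref{prop:Bijection Pi} and isolate those two chains explicitly.
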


We now prove the above theorem by induction on the order of $G$ and assume that the result holds for every choice of groups $U'\unlhd G'\unlhd A'$ with $|G'|<|G|$. We proceed by proving a series of intermediate results. In what follows, given a normal $p$-subgroup $Q$ of a finite group $H$ and a collection $\mathcal{B}$ of $p$-blocks of $H$, we define the set of pairs
\[\C^f(\mathcal{B},Q):=\coprod\limits_{b\in\mathcal{B}}\C^f(b,Q)\]
for any non-negative integer $f$. Notice that the partition of each set $\C^f(b,Q)$ into $\C^f(b,Q)_\pm$ induces a partition of the union $\C^f(\mathcal{B},Q)$ into the naturally defined subsets $\C^f(\mathcal{B},Q)_\pm$.

\begin{lem}
\label{lem:CTC for maximal defect follows from iAM, 1}
Let $Q$ be a $p$-subgroup of $G$ satisfying $U<Q<D$ for some defect group $D$ of $B$ and denote by $B_Q$ the set of those $p$-blocks $b$ of $\n_G(Q)$ satisfying $b^G=B$ and $d(b)=d$, where $|D|=p^d$. Then there exists an $\n_A(Q)_B$-equivariant bijection
\[\Omega_{B_Q,Q}:\C^d(B_Q,Q)_+/\n_G(Q)\to\C^d(B_Q,Q)_-/\n_G(Q)\]
such that
\[\left(\n_A(Q)_{\varsigma,\varphi},\n_G(Q)_\varsigma,\varphi\right)\iso{\n_G(Q)}\left(\n_A(Q)_{\varrho,\psi},\n_G(Q)_\varrho,\psi\right)\]
for every $(\varsigma,\varphi)\in\C^d(B_Q,Q)_+$ and $(\varrho,\psi)\in\Omega_{B_Q,Q}(\overline{(\varsigma,\varphi)})$ where we now denote by $\overline{(\varsigma,\varphi)}$ the $\n_G(Q)$-orbit of $(\varsigma,\varphi)$.
\end{lem}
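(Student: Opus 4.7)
The plan is to apply the inductive hypothesis of Theorem \ref{thm:CTC for maximal defect follows from iAM} to the embedding $\n_G(Q)\unlhd \n_A(Q)$, with normal $p$-subgroup $Q$, separately for each block $b\in B_Q$, and then to assemble the resulting bijections in an $\n_A(Q)_B$-equivariant way. In the context in which this lemma will be used (within the proof of Theorem \ref{thm:CTC for maximal defect follows from iAM}) the subgroup $Q$ is not normal in $G$, so $|\n_G(Q)|<|G|$ and the inductive hypothesis is available.

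First, I would verify the hypotheses of the inductive statement for the data $(\n_A(Q),\n_G(Q),Q,b)$: the subgroup $Q$ is normal in $\n_G(Q)$ by construction; by the definition of $B_Q$, each $b\in B_Q$ has defect $d(b)=d$, and since $|Q|<|D|=p^d$, the strict inequality $d(b)>\log_p|Q|$ required by Theorem \ref{thm:CTC for maximal defect follows from iAM} holds; and every non-abelian finite simple group involved in $\n_G(Q)$ is a fortiori involved in $G$, so the inductive Alperin--McKay condition is satisfied for all the relevant covering groups. Applying the inductive hypothesis to each $b\in B_Q$ would then provide an $\n_A(Q)_b$-equivariant bijection
\[\Omega_{b,Q}:\C^d(b,Q)_+/\n_G(Q)\to \C^d(b,Q)_-/\n_G(Q)\]
together with the $\n_G(Q)$-block isomorphisms of character triples that the lemma requires.

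To obtain the desired $\n_A(Q)_B$-equivariant bijection, I would choose a set of representatives for the $\n_A(Q)_B$-orbits on $B_Q$, use the previous step to fix an $\Omega_{b,Q}$ for each representative $b$, and then propagate these choices to the remaining blocks in each orbit by conjugating with elements of $\n_A(Q)_B$. The disjoint union of these bijections over $b\in B_Q$ yields $\Omega_{B_Q,Q}$. The main obstacle I anticipate is the bookkeeping needed to check that the individually $\n_A(Q)_b$-equivariant bijections do patch together into a single $\n_A(Q)_B$-equivariant bijection and that the character triple isomorphisms are preserved under conjugation-propagation; this last point ultimately rests on the conjugacy invariance of $\n_G(Q)$-block isomorphisms of character triples recorded in \cite[Lemma 3.8]{Spa17}.
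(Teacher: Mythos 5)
Your proposal is correct and follows essentially the same route as the paper: apply the inductive hypothesis of Theorem \ref{thm:CTC for maximal defect follows from iAM} to $Q\unlhd\n_G(Q)\unlhd\n_A(Q)$ for each $b\in B_Q$ (noting $d(b)=d>\log_p|Q|$ since $Q<D$), then glue the $\n_A(Q)_b$-equivariant bijections along an $\n_A(Q)_B$-transversal of $B_Q$ by conjugation. The only point worth tightening is your justification that $\n_G(Q)<G$: the paper secures this by first reducing to $U=\O_p(G)$ via \cite[Lemma 2.3]{Ros22}, after which $U<Q$ forces $Q$ to be non-normal in $G$, rather than by appealing to the context in which the lemma is applied.
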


\begin{proof}
Without loss of generality we may assume that $U=\O_p(G)$. For if it weren't, the argument used in the proof of \cite[Lemma 2.3]{Ros22} would give the bijection required in Theorem \ref{thm:CTC for maximal defect follows from iAM}. In particular, the assumption $U<Q$ implies that $\n_G(Q)<G$ and therefore the statement of Theorem \ref{thm:CTC for maximal defect follows from iAM} holds true for $Q\unlhd \n_G(Q)\unlhd \n_A(Q)$. Then, if $b$ is any block belonging to $B_Q$ and $|Q|=p^l$, the assumption $Q<D$ implies that $d=d(b)>l$ and we obtain an $\n_A(Q)_b$-equivariant bijection
\[\Omega_{b,Q}:\C^d(b,Q)_+/\n_G(Q)\to\C^d(b,Q)_-/\n_G(Q)\]
such that
\begin{equation}
\label{eq:CTC for maximal defect follows from iAM, 1}
\left(\n_A(Q)_{\varsigma,\varphi},\n_G(Q)_\varsigma,\varphi\right)\iso{\n_G(Q)}\left(\n_A(Q)_{\varrho,\psi},\n_G(Q)_\varrho,\psi\right)
\end{equation}
for every $(\varsigma,\varphi)\in\C^d(b,Q)_+$ and $(\varrho,\psi)\in\Omega_{b,Q}(\overline{(\varsigma,\varphi)})$. Next, observe that $\n_A(Q)_B$ acts by conjugation on the set of blocks $B_Q$ and choose an $\n_A(Q)_B$-transversal $\mathcal{S}$ in $B_Q$. For each block $b\in\mathcal{S}$, notice that $\n_A(Q)_b\leq \n_A(Q)_B$ and fix an $\n_A(Q)_b$-transversal $\mathcal{S}_b^+$ in $\C^d(b,Q)_+/\n_G(Q)$. Since the bijection $\Omega_{b,Q}$ is $\n_A(Q)_b$-equivariant, we deduce that the image $\mathcal{S}_b^-$ of $\mathcal{S}_b^+$ under the map $\Omega_{b,Q}$ is an $\n_A(Q)_b$-transversal in $\C^d(b,Q)_-/\n_G(Q)$. It follows that the set
\[\mathcal{T}:=\coprod\limits_{b\in \mathcal{S}}\mathcal{S}_b^\pm\]
is an $\n_A(Q)_B$-transversal in $\C^d(B_Q,Q)_\pm/\n_G(Q)$ and that the maps $\Omega_{b,Q}$, for $b\in\mathcal{S}$, induce a bijection between the transversals $\mathcal{T}^+$ and $\mathcal{T}^-$. This bijection can be extended to an $\n_A(Q)_B$-equivariant bijection $\Omega_{B_Q,Q}$ between $\C^d(B_Q,Q)_+/\n_G(Q)$ and $\C^d(B_Q,Q)_-/\n_G(Q)$ by setting
\[\Omega_{B_Q,Q}\left(\overline{(\varsigma,\varphi)}^x\right):=\overline{(\varrho,\psi)}^x\]
for every $x\in\n_A(Q)_B$ and every $\overline{(\varsigma,\varphi)}\in\mathcal{T}^+$ corresponding to $\overline{(\varrho,\psi)}\in\mathcal{T}^-$. Furthermore, observe that the $\n_G(Q)$-isomorphism required in the statement is the same as the one given in \eqref{eq:CTC for maximal defect follows from iAM, 1} by the bijections $\Omega_{b,Q}$. This completes the proof.
\end{proof}

Before proceeding to the next step, we introduced some further notation. For every $p$-subgroup $Q$ of $G$ strictly containing $U$, we define the subset $\C^d_Q(B,U)$ consisting of those pairs $(\sigma,\vartheta)$ in $\C^d(B,U)$ such that the $p$-chain $\sigma$ satisfies $\sigma=\{D_0=U<D_1=Q<D_2<\dots<D_n\}$ for some $n\geq 1$. In other words, $\C^d_Q(B,U)$ is the set of pairs $(\sigma,\vartheta)$ such that $Q$ is the second term of the chain $\sigma$. In this case, we also define $\C^d_Q(B,U)_\pm$ as the intersection of $\C^d_Q(B,U)$ with $\C^d(B,U)_\pm$. If we denote by $\n_A(U,Q)$ the intersection $\n_A(U)\cap \n_A(Q)$, then $\n_A(U,Q)_B$ acts by conjugation on the sets $\C^d_Q(B,U)_\pm$. Using Lemma \ref{lem:CTC for maximal defect follows from iAM, 1}, we can construct a bijection between the sets $\C^d_Q(U,B)_\pm$.

\begin{cor}
\label{cor:CTC for maximal defect follows from iAM, 2}
Let $Q$ be a $p$-subgroup of $G$ satisfying $U<Q<D$ for some defect group $D$ of $B$ and set $d:=d(B)$. Then there exists an $\n_A(U,Q)_B$-equivariant bijection
\[\Theta_Q:\C^d_Q(B,U)_+/\n_G(Q)\to\C^d_Q(B,U)_-/\n_G(Q)\]
such that
\[\left(\n_A(Q)_{\sigma,\vartheta},\n_G(Q)_\sigma,\vartheta\right)\iso{\n_G(Q)}\left(\n_A(Q)_{\rho,\chi},\n_G(Q)_\rho,\chi\right)\]
for every $(\sigma,\vartheta)\in\C^d_Q(B,U)_+$ and $(\rho,\chi)\in\Theta_Q(\overline{(\sigma,\vartheta)})$ where we now denote by $\overline{(\sigma,\vartheta)}$ the $\n_G(Q)$-orbit of $(\sigma,\vartheta)$.
\end{cor}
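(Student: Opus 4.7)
The plan is to reduce Corollary~\ref{cor:CTC for maximal defect follows from iAM, 2} to Lemma~\ref{lem:CTC for maximal defect follows from iAM, 1} via an explicit sign-reversing bijection obtained by prepending or removing the term $U$ at the beginning of a $p$-chain. First I would define a map $\Phi:\C^d(B_Q,Q)\to\C^d_Q(B,U)$ by sending a pair $(\varsigma,\varphi)$ with $\varsigma=\{Q<D_2<\dots<D_n\}$ to the pair $(\sigma,\varphi)$ with $\sigma=\{U<Q<D_2<\dots<D_n\}$. Since $U\unlhd G$ forces $\n_G(U)=G$, one has $G_\sigma=\n_G(Q)_\varsigma$, so $\varphi$ is literally the same character on both sides. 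The block conditions match by transitivity of Brauer induction: the intermediate block $\bl(\varphi)^{\n_G(Q)}$ has defect sandwiched between $d(\varphi)=d$ and $d(B)=d$, hence equal to $d$, so it lies in $B_Q$. A length count gives $|\sigma|=|\varsigma|+1$, so $\Phi$ interchanges the $\pm$ subsets; equivariance with respect to the $\n_A(U,Q)_B$-action and compatibility with the passage to $\n_G(Q)$-orbits (using $\n_G(Q)\leq\n_G(U)$) are both direct.

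I would then transport $\Omega_{B_Q,Q}$ through $\Phi$ by setting $\Theta_Q:=\Phi\circ\Omega_{B_Q,Q}^{-1}\circ\Phi^{-1}$, where inverting $\Omega_{B_Q,Q}$ compensates for the sign swap introduced by $\Phi$. This produces an $\n_A(U,Q)_B$-equivariant bijection between $\C^d_Q(B,U)_+/\n_G(Q)$ and $\C^d_Q(B,U)_-/\n_G(Q)$, as required by the statement.

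The main obstacle is to transfer the character triple isomorphism. Lemma~\ref{lem:CTC for maximal defect follows from iAM, 1} supplies $\iso{\n_G(Q)}$ at the level of the triples involving $\n_A(Q)_{\varsigma,\varphi}$ and $\n_A(Q)_{\varrho,\psi}$, whereas the corollary demands it for the potentially smaller groups $\n_A(Q)_{\sigma,\vartheta}$ and $\n_A(Q)_{\rho,\chi}$. The identity $\n_A(Q)_{\sigma,\vartheta}=\n_A(Q)_{\varsigma,\varphi}\cap\n_A(U)$ (and its analogue on the other side) arises because any element of $\n_A(Q)$ that stabilises the longer chain $\sigma$ must normalise each of its terms, including $U$. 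I would conclude by restricting the isomorphism to these subgroups using the standard stability of $G$-block isomorphism of character triples under restriction to intermediate subgroups between $\n_G(Q)_\varsigma$ and the ambient stabiliser (see \cite[Lemma~3.8]{Spa17}), together with the observation that the correspondence between the two sides pairs the $\n_A(U)$-intersections compatibly, a property inherited from the $\n_A(U,Q)_B$-equivariance of $\Omega_{B_Q,Q}$. I expect this restriction step, with the attendant bookkeeping around the $U$-normaliser, to be the trickiest part of the argument; everything else is formal once $\Phi$ is in place.
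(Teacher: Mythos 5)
Your proposal is correct and follows essentially the same route as the paper: remove/prepend the initial term $U$ to identify $\C^d_Q(B,U)_\pm$ with $\C^d(B_Q,Q)_\mp$, then transport $\Omega_{B_Q,Q}$ (inverted to absorb the sign swap). The only difference is your final restriction step via \cite[Lemma 3.8]{Spa17}: the paper avoids it entirely by first reducing to $U=\O_p(G)$, which is normal in $A$, so that $\n_A(Q)_{\sigma}=\n_A(Q)_{\sigma_U}$ and the character triples on the two sides literally coincide.
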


\begin{proof}
First, observe that if $\sigma$ is a normal $p$-chain of $G$ with second term $Q$ then each term of $\sigma$ is contained in $\n_G(Q)$. It follows that, if we define $\sigma_U$ to be the $p$-chain obtained by removing $U$ from $\sigma$, then the assignment $\sigma\mapsto \sigma_U$ defines a bijection between the set of normal $p$-chains of $G$ starting with $U$ and with second term $Q$ and the set of normal $p$-chains of $\n_G(Q)$ starting with $Q$. Moreover, observe that $|\sigma|=|\sigma_U|+1$ and, by assuming as we may that $U=\O_p(G)$, that $\n_A(Q)_{\sigma}=\n_A(U)\cap \n_A(Q)_{\sigma_U}=\n_A(Q)_{\sigma_U}$. Then, we get a bijection
\begin{align*}
\C^d_Q(B,U)_\pm&\to \C^d(B_Q,Q)_\mp
\\
(\sigma,\vartheta)&\mapsto (\sigma_U,\vartheta)
\end{align*}
that preserves the conjugacy action of $\n_A(Q)_B$. Consider now the map $\Omega_{B_Q,Q}$ given by Lemma \ref{lem:CTC for maximal defect follows from iAM, 1} and fix pairs $(\varsigma,\varphi)\in\C^d(B_Q,Q)_+$ and $(\varrho,\psi)\in\Omega_{B_Q,Q}(\overline{(\varsigma,\varphi)})$. Write $(\varsigma,\varphi)=(\rho_U,\chi)$ and $(\varrho,\psi)=(\sigma_U,\vartheta)$ for $(\rho,\chi)\in\C^d_Q(B,U)_-$ and $(\sigma,\vartheta)\in\C^d_Q(B,U)_+$. We then define the map $\Theta_Q$ by sending the $\n_G(Q)$-orbit of the pair $(\sigma,\vartheta)$ to the $\n_G(Q)$-orbit of $(\rho,\chi)$ constructed above. Notice that $\Theta_Q$ is $\n_A(Q)_B$-equivariant since so is $\Omega_{B_Q,Q}$. Moreover, observe that the $\n_G(Q)$-block isomorphism is an equivalence relation which is in particular reflexive. Then, since the character triples $(\n_A(Q)_{\sigma,\vartheta},\n_G(Q)_\sigma,\vartheta)$ and $(\n_A(Q)_{\rho,\chi},\n_G(Q)_\rho,\chi)$ coincide with the character triples $(\n_A(Q)_{\varrho,\psi},\n_G(Q)_\varrho,\psi)$ and $(\n_A(Q)_{\varsigma,\varphi},\n_G(Q)_\varsigma,\varphi)$ respectively, the $\n_G(Q)$-block isomorphism in the statement above coincides with that given by Lemma \ref{lem:CTC for maximal defect follows from iAM, 1}.
\end{proof}

In the next proposition, whose statement will be used in the proof of Theorem \ref{thm:CTC for maximal defect follows from iAM}, we combine the bijections $\Theta_Q$ for all $p$-subgroups $Q$ belonging to a $G$-conjugacy class. Given a $p$-subgroup $Q$ satisfying $U<Q$, we denote by $\overline{Q}$ its $G$-orbit and by $\C^d_{\overline{Q}}(B,U)$ the subset of $\C^d(B,U)$ consisting of those pairs $(\sigma,\vartheta)$ such that the second term of the $p$-chain $\sigma$ is $G$-conjugate to $Q$. Equivalently, $\C^d_{\overline{Q}}(B,U)$ is the set of all the pairs of $\C^d(B,U)$ that are $G$-conjugate to some pair of $\C^d_Q(B,U)$. Notice that $G\n_A(U,Q)_B$-acts on $\C^d_{\overline{Q}}(B,U)$ and denote by $\C^d_{\overline{Q}}(B,U)_\pm$ the intersection of $\C^d_{\overline{Q}}(B,U)$ with $\C^d(B,U)_\pm$.

\begin{prop}
\label{prop:CTC for maximal defect follows from iAM, 3}
Let $Q$ be a $p$-subgroup of $G$ satisfying $U<Q<D$, for some defect group $D$ of $B$, and denote by $\overline{Q}$ its $G$-orbit. Then, for $d:=d(B)$, there exists a $G\n_A(U,Q)_B$-equivariant bijection
\begin{equation}
\label{eq:CTC for maximal defect follows from iAM, 3, 1}
\Theta_{\overline{Q}}:\C^d_{\overline{Q}}(B,U)_+/G\to\C^d_{\overline{Q}}(B,U)_-/G
\end{equation}
such that
\[\left(A_{\sigma,\vartheta},G_\sigma,\vartheta\right)\iso{G}\left(A_{\rho,\chi},G_\rho,\chi\right)\]
for every $(\sigma,\vartheta)\in\C^d_{\overline{Q}}(B,U)_+$ and $(\rho,\chi)\in\Theta_{\overline{Q}}(\overline{(\sigma,\vartheta)})$ where we now denote by $\overline{(\sigma,\vartheta)}$ the $G$-orbit of the pair $(\sigma,\vartheta)$.
\end{prop}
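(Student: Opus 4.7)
The plan is to reduce the statement to Corollary \ref{cor:CTC for maximal defect follows from iAM, 2} via the natural identification between $G$-orbits in $\C^d_{\overline{Q}}(B,U)$ and $\n_G(Q)$-orbits in $\C^d_Q(B,U)$, and then to verify that the character-triple isomorphism delivered by that corollary transfers from the $\n_G(Q)$-block level to the $G$-block level.

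First I would establish a canonical bijection
\[
\C^d_Q(B,U)_\pm/\n_G(Q)\;\longrightarrow\;\C^d_{\overline{Q}}(B,U)_\pm/G,
\]
sending an $\n_G(Q)$-orbit to its $G$-orbit. Surjectivity is built into the definition of $\C^d_{\overline{Q}}(B,U)$, and injectivity follows because any $g\in G$ carrying one pair in $\C^d_Q(B,U)$ to another must send the common second term $Q$ to itself, so $g\in\n_G(Q)$. Composing with the map $\Theta_Q$ of Corollary \ref{cor:CTC for maximal defect follows from iAM, 2} then defines $\Theta_{\overline{Q}}$. Equivariance for $\n_A(U,Q)_B$ follows from the equivariance of $\Theta_Q$ and from the fact that $\n_A(U,Q)_B$ normalises both $Q$ and $G$, hence acts compatibly on both sides of the orbit bijection; since $G$ acts trivially on $G$-orbits, the map is automatically $G\n_A(U,Q)_B$-equivariant.

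For the character-triple condition, given $(\sigma,\vartheta)\in\C^d_{\overline{Q}}(B,U)_+$ and $(\rho,\chi)\in\Theta_{\overline{Q}}(\overline{(\sigma,\vartheta)})$, I would replace both by $G$-conjugate pairs lying in $\C^d_Q(B,U)$ whose $\n_G(Q)$-orbits correspond under $\Theta_Q$; this does not affect the required relation by \cite[Lemma 3.8]{Spa17}. Since the second term of each of these chains is $Q$, any element of $A$ stabilising the chain automatically normalises $Q$, whence $A_{\sigma,\vartheta}=\n_A(Q)_{\sigma,\vartheta}$ and $G_\sigma=\n_G(Q)_\sigma$ (and similarly on the other side). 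Thus the triples appearing in the statement of Proposition \ref{prop:CTC for maximal defect follows from iAM, 3} coincide verbatim with those already handled by Corollary \ref{cor:CTC for maximal defect follows from iAM, 2}.

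The hard part will be the final upgrade from $\iso{\n_G(Q)}$ to $\iso{G}$. Because the stabiliser groups $G_\sigma$ and $G_\rho$ are already contained in $\n_G(Q)$, the projective representations and central extensions encoding the relation $\iso{\n_G(Q)}$ are indexed by data that lie entirely inside $\n_G(Q)$, and I would expect them to carry over to the $G$-setting without change. The delicate point is checking that the block-theoretic compatibility (preservation under Brauer induction) persists after enlarging the ambient normal subgroup from $\n_G(Q)$ to $G$; I plan to extract this from the functorial properties of $G$-block isomorphisms collected in \cite[Lemma 3.8]{Spa17}, which describe precisely how $\iso{G}$ behaves under changes of the normal subgroup when the character-theoretic data is unaffected.
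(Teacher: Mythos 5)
Your construction of $\Theta_{\overline{Q}}$ and the equivariance argument match the paper's proof: the identification of $\C^d_{\overline{Q}}(B,U)_\pm/G$ with $\C^d_Q(B,U)_\pm/\n_G(Q)$ (injectivity because any $g\in G$ carrying one chain with second term $Q$ to another must normalise $Q$), the composition with $\Theta_Q$ from Corollary \ref{cor:CTC for maximal defect follows from iAM, 2}, and the observation that $A_\sigma=\n_A(Q)_\sigma$ and $G_\sigma=\n_G(Q)_\sigma$ once $Q$ is a term of $\sigma$ are all exactly the steps taken in the paper, and your reduction of the character-triple condition to the $\n_G(Q)$-level statement is correct.

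The gap is at the step you yourself flag as the hard part. Passing from $\iso{\n_G(Q)}$ to $\iso{G}$ is not a formal consequence of the properties collected in \cite[Lemma 3.8]{Spa17}: that lemma records that $\iso{G}$ is an equivalence relation compatible with conjugation, but it contains no statement allowing one to enlarge the normal subgroup over which the block isomorphism is taken. The obstruction is genuinely block-theoretic: the relation $\iso{N}$ demands compatibility of Brauer induction for all intermediate groups between $N$ and the ambient group, so replacing $N=\n_G(Q)$ by the larger group $G$ imposes new conditions that do not follow from the character-theoretic data being unchanged. The paper resolves this by invoking \cite[Lemma 2.11]{Ros22}, whose hypothesis must be verified concretely: one needs $\c_{GA_{\sigma,\vartheta}}(D)\leq\n_G(Q)A_{\sigma,\vartheta}$ for $D$ a defect group of the block of $\vartheta$ in $G_\sigma$, together with the analogous containment for a defect group of the block of $\chi$ in $G_\rho$. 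This holds because $Q\leq\O_p(G_\sigma)\leq D$ by \cite[Theorem 4.8]{Nav98}, so any element centralising $D$ centralises, and hence normalises, $Q$. Without this verification (or an equivalent argument) the upgrade to $\iso{G}$ is unjustified, and the proof is incomplete at precisely the point where the proposition's content lies.
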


\begin{proof}
Throughout the proof we need to differentiate between $G$-orbits and $\n_G(Q)$-orbits of pairs $(\sigma,\vartheta)$. For this reason, we denote by $\mathcal{O}_G(\sigma,\vartheta)$ and $\mathcal{O}_{\n_G(Q)}(\sigma,\vartheta)$ the $G$-orbit and the $\n_G(Q)$-orbit of $(\sigma,\vartheta)$ respectively. Suppose that $(\sigma,\vartheta)$ belongs to $\C^d_{\overline{Q}}(B,U)_+$ and fix $g\in G$ such that $(\sigma,\vartheta)^g$ belongs to $\C^d_Q(B,U)_+$. If $\Theta_Q$ is the map given by Corollary \ref{cor:CTC for maximal defect follows from iAM, 2}, then choose $(\rho,\chi)$ in $\C^d_Q(B,U)_-$ such that $\mathcal{O}_{\n_G(Q)}(\rho,\chi)=\Theta_Q(\mathcal{O}_{\n_G(Q)}((\sigma,\vartheta)^g))$. We define
\[\Theta_{\overline{Q}}\left(\mathcal{O}_G(\sigma,\vartheta)\right):=\mathcal{O}_G(\rho,\chi)\]
and claim that $\Theta_{\overline{Q}}$ is a well-defined $G\n_A(U,Q)_B$-equivariant bijection between $\C^d_{\overline{Q}}(B,U)_+/G$ and $\C^d_{\overline{Q}}(B,U)_-/G$. First, suppose that $h\in G$ and $(\sigma,\vartheta)^h$ belongs to $\C^d_Q(B,U)_+$. If $D_1$ is the second term of the $p$-chain $\sigma$, then it follows that $Q^{g^{-1}}=D_1=Q^{h^{-1}}$ so that $h^{-1}g\in\n_G(Q)$ and hence $\mathcal{O}_{\n_G(Q)}((\sigma,\vartheta)^h)=\mathcal{O}_{\n_G(Q)}((\sigma,\vartheta)^{hh^{-1}g})=\mathcal{O}_{\n_G(Q)}((\sigma,\vartheta)^g)$. In particular, we get $\Theta_Q(\mathcal{O}_{\n_G(Q)}((\sigma,\vartheta)^h))=\mathcal{O}_{\n_G(Q)}(\rho,\chi)$. This shows that the definition of $\Theta_{\overline{Q}}$ does not depend on the choice of the element $g\in G$ while it is clear that it does not depend on the choice of the representative $(\rho,\chi)$ in the $\n_G(Q)$-orbit $\Theta_Q(\mathcal{O}_{\n_G(Q)}(\sigma,\vartheta)^g)$. It also follows that the map $\Theta_{\overline{Q}}$ is $G$-equivariant. Let now $x\in\n_A(U,Q)_B$. By the above argument, we can assume that the pair $(\sigma,\vartheta)$ belongs to $\C^d_Q(B,U)_+$. Then, since $\Theta_Q$ is $\n_A(U,Q)_B$-equivariant, we get $\Theta_Q(\mathcal{O}_{\n_G(Q)}(\sigma,\vartheta)^x)=\Theta_Q(\mathcal{O}_{\n_G(Q)}(\sigma,\vartheta))^x=\mathcal{O}_{\n_G(Q)}(\rho,\chi)^x$ from which we obtain $\Theta_{\overline{Q}}(\mathcal{O}_G(\sigma,\vartheta)^x)=\mathcal{O}_G(\rho,\chi)^x$. This proves our claim.

Next, we prove the condition on character triples. Keep $(\sigma,\vartheta)$ and $(\rho,\chi)$ as before. Recall that, up to $G$-conjugation, we may assume in the definition of $\Theta_{\overline{Q}}$ that $Q$ coincides with the second term of $\sigma$ and of $\rho$. Moreover, since $G$-block isomorphisms are compatible with $G$-conjugation according to \cite[Lemma 3.8 (c)]{Spa17}, this assumption is compatible with the condition on character triples. Then, since the $\n_G(Q)$-orbits of $(\sigma,\vartheta)$ and $(\rho,\chi)$ correspond under $\Theta_Q$, Corollary \ref{cor:CTC for maximal defect follows from iAM, 2} yields
\begin{equation}
\label{eq:CTC for maximal defect follows from iAM, 3, 2}
\left(\n_A(Q)_{\sigma,\vartheta},\n_G(Q)_\sigma,\vartheta\right)\iso{\n_G(Q)}\left(\n_A(Q)_{\rho,\chi},\n_G(Q)_\rho,\chi\right).
\end{equation}
Furthermore, observe that since $Q$ is a term of the $p$-chains $\sigma$ and $\rho$ we have $A_\sigma=\n_A(Q)_\sigma$ and $A_\rho=\n_A(Q)_\rho$. We can then rewrite \eqref{eq:CTC for maximal defect follows from iAM, 3, 2} as 
\begin{equation}
\label{eq:CTC for maximal defect follows from iAM, 3, 3}
\left(A_{\sigma,\vartheta},G_\sigma,\vartheta\right)\iso{\n_G(Q)}\left(A_{\rho,\chi},G_\rho,\chi\right).
\end{equation}
To conclude we need to show that the $\n_G(Q)$-block isomorphism \eqref{eq:CTC for maximal defect follows from iAM, 3, 3} is actually a $G$-block isomorphism. This is done by applying \cite[Lemma 2.11]{Ros22}. In fact, if $D$ denotes a defect group of the block of $\vartheta$ in $G_\sigma$, then $Q\leq \O_p(G_\sigma)\leq D$ and we get $\c_{GA_{\sigma,\vartheta}}(D)\leq \n_{GA_{\sigma,\vartheta}}(Q)=\n_G(Q)A_{\sigma,\vartheta}$. A similar argument shows that $\c_{GA_{\rho,\chi}}(P)\leq \n_G(Q)A_{\rho,\chi}$ for a defect group $P$ of the block of $\chi$ in $G_\rho$ hence verifying the hypothesis of \cite[Lemma 2.11]{Ros22}. The proof is now complete.
\end{proof}

We now come to the final step of the proof of Theorem \ref{thm:CTC for maximal defect follows from iAM}.

\begin{proof}[Proof of Theorem \ref{thm:CTC for maximal defect follows from iAM}]
Recall that $U$ is a normal $p$-subgroup of $G$ of order $|U|=p^m$ and let $D$ be a defect group of the block $B$. By assumption $m<d=d(B)$ and it follows from \cite[Theorem 4.8]{Nav98} that $U<D$. We claim that every pair $(\sigma,\vartheta)\in\C^d(B,U)$ is $G$-conjugate to a pair whose corresponding $p$-chain has all of its terms contained in $D$. For this notice that, if $b$ is a block of $G_\sigma$ satisfying $b^G=B$, then we can find a defect group $P$ of $b$ and an element $g\in G$ such that $P\leq D^g$ according to \cite[Lemma 4.13]{Nav98}. Now, if $D(\sigma)$ denotes the last term of $\sigma$, then \cite[Theorem 4.8]{Nav98} implies that $D(\sigma)\leq \O_p(G_\sigma)\leq P\leq D^g$. By replacing $(\sigma,\vartheta)$ with $(\sigma,\vartheta)^{g^{-1}}$ we obtain a pair with the properties required above. Thus, we can write $\sigma=\{D_0=U<D_1<\dots<D(\sigma)\}$ with $D(\sigma)\leq D$ and observe that either $|\sigma|=0$, which leads to the $p$-chain $\sigma_+=\{D_0=U\}$, or $|\sigma|\geq 1$ in which case we can have either $U<D_1<D$ or $U<D_1=D$, which leads to the $p$-chain $\sigma_-=\{D_0=U<D_1=D\}$.

Consider the set $\mathcal{F}$ of $p$-subgroups $Q$ of $G$ satisfying $U<Q^g<D$ for some $g\in G$. We denote by $\mathcal{F}/G$ the corresponding set of $G$-orbits and by $\overline{Q}$ the $G$-orbit of $Q$. If $\overline{Q}\in\mathcal{F}/G$ and $x\in\n_A(U)_B$, then $U<Q^g<D$ for some $g\in G$ and $U<Q^{gx}<D^x$. On the other hand, since $x$ stabilises the block $B$, we know that $D^x$ is a defect group of $B$ and so there exists $h\in G$ such that $D^{xh}=D$. It follows that $U<Q^{gxh}<D^{xh}=D$. Furthermore, since $G\unlhd \n_A(U)_B$, we can write $gx=xg'$ for some $g'\in G$ and we conclude that $U<Q^{xg'h}<D$. This shows that $\overline{Q}^x$ belongs to $\mathcal{F}/G$ and therefore the group $\n_A(U)_B$ acts by conjugation on $\mathcal{F}/G$. Fix an $\n_A(U)_B$-transversal $\mathcal{S}$ in $\mathcal{F}/G$ and observe that, for every $\overline{Q}\in\mathcal{S}$, Proposition \ref{prop:CTC for maximal defect follows from iAM, 3} gives a $G\n_A(U,Q)_B$-equivariant bijection
\[\Theta_{\overline{Q}}:\C^d_{\overline{Q}}(B,U)_+/G\to\C^d_{\overline{Q}}(B,U)_-/G\]
such that
\[\left(A_{\sigma,\vartheta},G_\sigma,\vartheta\right)\iso{G}\left(A_{\rho,\chi},G_\rho,\chi\right)\]
for every $(\sigma,\vartheta)\in\C^d_{\overline{Q}}(B,U)_+$ and $(\rho,\chi)\in\Theta_{\overline{Q}}(\overline{(\sigma,\vartheta)})$. In particular, if we fix a $G\n_A(U,Q)_B$-transversal $\mathcal{T}_{\overline{Q}}^+$ in $\C^d_{\overline{Q}}(B,U)_+/G$, then the equivariance properties of $\Theta_{\overline{Q}}$ imply that the image $\mathcal{T}_{\overline{Q}}^-$ of $\mathcal{T}_{\overline{Q}}^+$ under $\Theta_{\overline{Q}}$ is a $G\n_A(U,Q)_B$-transversal in $\C^d_{\overline{Q}}(B,U)_-/G$. If we now define $\C^d_\mathcal{F}(B,U)_\pm$ to be the subset of $\C^d(B,U)_\pm$ consisting of those pairs $(\sigma,\vartheta)$ such that the second term of $\sigma$ belongs to $\mathcal{F}$, then we conclude from the above discussion that
\[\mathcal{T}_\mathcal{F}^+:=\coprod_{\overline{Q}\in\mathcal{S}}\mathcal{T}_{\overline{Q}}^+\]
and
\[\mathcal{T}_\mathcal{F}^-:=\coprod_{\overline{Q}\in\mathcal{S}}\mathcal{T}_{\overline{Q}}^-\]
are $\n_A(U)_B$-transversals in $\C^d_\mathcal{F}(B,U)_+/G$ and $\C^d_\mathcal{F}(B,U)_-/G$ respectively. This follows from the fact that, by a Frattini argument, $G\n_A(U,Q)_B$ coincides with the stabiliser of the $G$-orbit $\overline{Q}$ under the action of $\n_A(U)_B$. We can then define an $\n_A(U)_B$-equivariant bijection
\[\Omega_\mathcal{F}:\C^d_\mathcal{F}(B,U)_+/G\to\C^d_\mathcal{F}(B,U)_-/G\]
by defining
\[\Omega_\mathcal{F}\left(\overline{(\sigma,\vartheta)}^x\right):=\overline{(\rho,\chi)}^x\]
for every $\overline{Q}\in\mathcal{S}$, every $\overline{(\sigma,\vartheta)}\in\mathcal{T}^+_{\overline{Q}}$ corresponding to $\overline{(\rho,\chi)}\in\mathcal{T}_{\overline{Q}}^-$ via $\Theta_{\overline{Q}}$, and every $x\in\n_A(U)_B$. By the properties of the maps $\Theta_{\overline{Q}}$, we get that the map $\Omega_\mathcal{F}$ satisfies the required condition on character triples.

Following the first paragraph of the proof, observe that the set $\C^d(B,U)_\pm$ can be partitioned into the subsets $\C^d_\mathcal{F}(B,U)_\pm$ and $\mathcal{G}_\pm$ where we define $\mathcal{G}_\pm$ as the set of those pairs $(\sigma,\vartheta)$ such that $\sigma$ is $G$-conjugate to $\sigma_\pm$. Notice that $\mathcal{G}_+$ is the set of pairs $(\sigma_+,\vartheta)$ with $\vartheta\in\irr^d(G_{\sigma_+})$ such that $\bl(\vartheta)^G=B$. Equivalently, since $\sigma_+$ is $G$-invariant and $d=d(B)$, the set $\mathcal{G}_+$ consists of those pairs $(\sigma_+,\vartheta)$ where $\vartheta$ is a character of $p$-height zero in the block $B$. In particular, if $\mathcal{S}_\mathcal{G}^+$ is an $\n_A(U,D)_B$-transversal in $\irr_0(B)$, then the set $\mathcal{T}_\mathcal{G}^+$ of $G$-orbits $\overline{(\sigma_+,\vartheta)}$ with $\vartheta\in\mathcal{S}_\mathcal{G}^+$ is a $G\n_A(U,D)_B$-transversal in $\mathcal{G}_+/G$. Next, since by hypothesis the inductive Alperin--McKay condition (as stated in Conjecture \ref{conj:iAM}) holds for every covering group of a non-abelian finite simple group involved in $G$, we can apply \cite[Theorem 7.1]{Nav-Spa14I} with respect to $G\unlhd \n_A(U)_B$ to obtain an $\n_A(U,D)_B$-equivariant bijection
\[\Pi_{B,D}:\irr_0(B)\to\irr_0(C)\]
where $C$ is the Brauer correspondent of $B$ in $\n_G(D)$. Moreover, we have
\begin{equation}
\label{eq:Proof of theorem iAM implies CTC max, 1}
\left(\n_A(U)_{B,\vartheta},G,\vartheta\right)\iso{G}\left(\n_A(U,D)_{B,\chi},\n_G(D),\chi\right)
\end{equation}
for every $\vartheta\in\irr_0(B)$ and $\chi=\Pi_{B,D}(\vartheta)$. Now the image $\mathcal{S}_{\mathcal{G}}^-$ of $\mathcal{S}_{\mathcal{G}}^+$ via the map $\Pi_{B,D}$ is an $\n_A(U,D)_B$-transversal in the set $\irr_0(C)$. Noticing that the set $\mathcal{G}_-$ consists of pairs of the form $(\sigma_-,\chi)^g$ for some $\chi\in\irr_0(C)$ and $g\in G$, we deduce that the set $\mathcal{T}_{\mathcal{G}}^-$ of $G$-orbits $\overline{(\sigma_-,\chi)}$ with $\chi\in\mathcal{S}_\mathcal{G}^-$ is a $G\n_A(U,D)_B$-transversal in $\mathcal{G}_-$. A Frattini argument also shows that $\n_A(U)_B=G\n_A(U,D)_B$. We can now define an $\n_A(U)_B$-equivariant bijection
\[\Omega_\mathcal{G}:\mathcal{G}_+/G\to\mathcal{G}_-/G\]
by defining
\[\Omega_\mathcal{G}\left(\overline{(\sigma_+,\vartheta)}^x\right):=\overline{\left(\sigma_-,\chi\right)}^x\]
for every $\overline{(\sigma_+,\vartheta)}\in\mathcal{T}_\mathcal{G}^+$ corresponding to $\overline{(\sigma_-,\chi)}\in\mathcal{T}_\mathcal{G}^-$ and every $x\in\n_A(U)_B$. The $G$-block isomorphism of character triples \eqref{eq:Proof of theorem iAM implies CTC max, 1} can be rewritten as
\[\left(A_{\sigma_+,\vartheta},G_{\sigma_+},\vartheta\right)\iso{G}\left(A_{\sigma_-,\chi},G_{\sigma_-},\chi\right).\]
We can now construct a map $\Omega$ with the properties required above by defining it to be $\Omega_\mathcal{F}$ and $\Omega_\mathcal{G}$ on the subsets $\C^d_\mathcal{F}(B,U)_+/G$ and $\mathcal{G}_+/G$ respectively. This concludes the proof.
\end{proof}

\section{Proof of Theorem \ref{thm:Main, CTC/DPC-max for p=2} and Corollary \ref{cor:Main CTC for abelian 2-blocks}}
\label{sec:Theorem A ad Corollary B}

We now obtain Theorem \ref{thm:Main, CTC/DPC-max for p=2} as a consequence of Theorem \ref{thm:CTC for maximal defect follows from iAM}.

\begin{proof}[Proof of Theorem \ref{thm:Main, CTC/DPC-max for p=2}]
As mentioned previously, the statement of Theorem \ref{thm:CTC for maximal defect follows from iAM} implies the Character Triple Conjecture in the form introduced in \cite[Conjecture 6.3]{Spa17} if we assume $U\leq \z(G)$. Furthermore, in this case it is no loss of generality to assume that $U=\O_p(G)$ according to \cite[Lemma 2.3]{Ros22}. In order to apply Theorem \ref{thm:CTC for maximal defect follows from iAM} observe that the inductive Alperin--McKay condition has been verified for the prime $p=2$ with respect to alternating simple groups \cite{Den14}, \cite[Corollary 8.3 (a)]{Spa13II}, Suzuki and Ree groups \cite[Theorem 1.1]{Mal14}, sporadic groups \cite{Bre-iAM}, groups of Lie type with exceptional Schur multiplier \cite{Bre-iAM}, \cite[Lemma 7.3]{Bro-Ruh22}, groups of Lie type in characteristic $2$ \cite[Proposition 14.8]{Ruh21}, classical groups of Lie type in odd characteristic \cite[Corollary 8.1]{Bro-Ruh22}, and finally exceptional groups of Lie type in odd characteristic \cite[Theorem C]{Ruh22AM}, \cite[Theorem C]{Ruh22}.
\end{proof}

As claimed in the introduction, using Theorem \ref{thm:Main, CTC/DPC-max for p=2} and Brauer's Height Zero Conjecture, we can prove that the Character Triple Conjecture holds for every $2$-block with abelian defect groups.

\begin{proof}[Proof of Corollary \ref{cor:Main CTC for abelian 2-blocks}]
Let $G\unlhd A$ be finite groups and $U\unlhd G$ a $2$-subgroup. Consider a $2$-block $B$ of $G$ with abelian defect group $D$ such that $U<D$ and write $d:=d(B)$. By Theorem \ref{thm:Main, CTC/DPC-max for p=2} there exists a bijection $\C^d(B,U)_+/G\to\C^d(B,U)_-/G$ as required by the Character Triple Conjecture. So it remains to show that such a bijection can be constructed by replacing $d$ with any other non-negative integer, say $0\leq f\neq d$. For this consider a pair $(\sigma,\vartheta)\in\C^f(B,U)_\pm$ so that $\vartheta$ is an irreducible character of the stabiliser $G_\sigma$ of defect $d(\vartheta)=f$ and whose block satisfies $\bl(\vartheta)^G=B$. Observe that $\sigma\neq \sigma_+:=\{D_0=U\}$ and $\sigma\neq \sigma_-:=\{D_0=U<D_1=D\}$. In fact, $G_{\sigma_+}=G$, $G_{\sigma_-}=\n_G(D)$ and, since $D$ is abelian, Brauer's Height Zero Conjecture (we actually only need the half proved in \cite{Kes-Mal13}) implies that $\irr(B)=\irr^d(B)$ and that $\irr(b)=\irr^d(b)$ where $b$ is the Brauer correspondent of $B$ in $\n_G(D)$. In particular the $2$-chain $\sigma$ belongs to the set $\mathcal{F}$ defined in the final step of the proof of Theorem \ref{thm:CTC for maximal defect follows from iAM}. Now proceeding by induction on the order of $G$ and arguing as in Lemma \ref{lem:CTC for maximal defect follows from iAM, 1}, Corollary \ref{cor:CTC for maximal defect follows from iAM, 2} and Proposition \ref{prop:CTC for maximal defect follows from iAM, 3}, it suffices to exhibit an $\n_A(Q)_b$-equivariant bijection $\C^f(b,Q)_+/\n_G(Q)\to \C^f(b,Q)_-/\n_G(Q)$ inducing $\n_G(Q)$-block isomorphisms for every $U<Q<D$ and every block $b$ of $\n_G(Q)$ satisfying $b^G=B$. In other words, we need to show that the Character Triple Conjecture holds for the $2$-block $b$ of $\n_G(Q)$ with respect to $f$. This follows by induction since the condition $b^G=B$ implies that $b$ has abelian defect groups according to \cite[Lemma 4.13]{Nav98}.
\end{proof}

\section{A converse to Theorem \ref{thm:Main, reduction for CTC-max}}
\label{sec:Converse, CTC implies iAM}

It was shown by Dade in \cite{Dad94} that the projective form of his conjecture implies the Alperin--McKay Conjecture. Later, Navarro \cite[Theorem 9.27]{Nav18} proved that the block-free version of Dade's Ordinary Conjecture implies the McKay Conjecture, while Kessar and Linckelmann \cite{Kes-Lin19} extended these results by proving that Dade's Ordinary Conjecture implies the Alperin--McKay Conjecture. It is therefore natural to ask whether the Character Triple Conjecture, which plays the role of an inductive condition for Dade's Projective Conjecture, implies the inductive Alperin--McKay condition. In this section, we show that this is the case and obtain the following result which can be seen as a converse to Theorem \ref{thm:Main, reduction for CTC-max}.

\begin{theo}
\label{thm:CTC-max implies iAM}
If the Character Triple Conjecture holds for maximal defect characters at the prime $p$, then the inductive Alperin--McKay condition (in the generality considered in Conjecture \ref{conj:iAM}) holds at the prime $p$.
\end{theo}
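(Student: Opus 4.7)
The plan is to prove Theorem \ref{thm:CTC-max implies iAM} by induction on $|G|$, using the assumed Character Triple Conjecture on $G$ itself together with its inductive consequences for proper normalisers provided by Theorem \ref{thm:CTC for maximal defect follows from iAM}. Given $G \unlhd A$ and a $p$-block $B$ of $G$ with defect group $D$, if $D \unlhd G$ the identity bijection trivially satisfies iAM, so I may assume $D \not\unlhd G$. After reducing to the setting $\O_p(G) \leq \z(G)$ in which the hypothesis is formulated, I put $U := \O_p(G)$ and $d := d(B)$, noting $U < D$.

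Applying the hypothesis to $(G, A, B)$ yields an $A_B$-equivariant bijection
\[\Omega : \C^d(B, U)_+/G \longrightarrow \C^d(B, U)_-/G\]
satisfying the $G$-block isomorphism condition on character triples. I then decompose, as in the proof of Theorem \ref{thm:CTC for maximal defect follows from iAM},
\[\C^d(B, U)_\pm/G \;=\; \mathcal{G}_\pm/G \;\sqcup\; \C^d_\mathcal{F}(B, U)_\pm/G,\]
where $\mathcal{G}_+/G$ is in natural bijection with $\irr_0(B)$ (via the chain $\{U\}$), $\mathcal{G}_-/G$ is in natural bijection with $\irr_0(b)$ (via the chain $\{U < D\}$), and the remaining chains have a second term $Q$ strictly between $U$ and a defect group. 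By the induction hypothesis, iAM holds for every proper subgroup $\n_G(Q)$ with $Q \in \mathcal{F}$, so Theorem \ref{thm:CTC for maximal defect follows from iAM} applies to these normalisers; the constructions of Lemma \ref{lem:CTC for maximal defect follows from iAM, 1}, Corollary \ref{cor:CTC for maximal defect follows from iAM, 2} and Proposition \ref{prop:CTC for maximal defect follows from iAM, 3} then assemble an $A_B$-equivariant bijection
\[\Omega_\mathcal{F} : \C^d_\mathcal{F}(B, U)_+/G \longrightarrow \C^d_\mathcal{F}(B, U)_-/G\]
also satisfying the $G$-block isomorphism condition.

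To combine $\Omega$ and $\Omega_\mathcal{F}$ into the desired iAM bijection I use an alternating-path argument. On the vertex set $\C^d(B, U)_+/G \sqcup \C^d(B, U)_-/G$ I form the bipartite graph whose edges are the pairs $\{x, \Omega(x)\}$ together with the pairs $\{z, \Omega_\mathcal{F}(z)\}$. Vertices in $\mathcal{G}_\pm/G$ have degree $1$ while those in $\C^d_\mathcal{F}(B, U)_\pm/G$ have degree $2$, so the graph decomposes into disjoint paths and cycles. A parity check along the alternation of edge-types shows that every path starting at a vertex of $\mathcal{G}_+/G$ has all its interior vertices in $\C^d_\mathcal{F}$, forcing the other endpoint to lie in $\mathcal{G}_-/G$. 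Pairing endpoints of each path defines an $A_B$-equivariant bijection $\Theta : \mathcal{G}_+/G \to \mathcal{G}_-/G$, and transitivity of the equivalence relation $\iso{G}$ along the path propagates the $G$-block isomorphism condition from the edges to the endpoints. Translating via the identifications $\mathcal{G}_+/G \cong \irr_0(B)$ and $\mathcal{G}_-/G \cong \irr_0(b)$ then yields the $\n_A(D)_B$-equivariant bijection and character triple isomorphism required by Conjecture \ref{conj:iAM}.

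The hardest step is expected to be the preliminary reduction to the case $\O_p(G) \leq \z(G)$, since the equivalence between the central and non-central forms of the Character Triple Conjecture is not elementary. A natural way around this is to invoke the Navarro--Sp\"ath reduction of Conjecture \ref{conj:iAM} to quasi-simple groups, for which the condition $\O_p(G) \leq \z(G)$ holds automatically, and then run the above argument. Once that is in place, the remainder is structural: the inductive hypothesis matches the chains with a nontrivial middle $p$-subgroup via Theorem \ref{thm:CTC for maximal defect follows from iAM}, the two extremal chains $\{U\}$ and $\{U < D\}$ parametrise $\irr_0(B)$ and $\irr_0(b)$, and the alternating-path cancellation extracts the iAM bijection from the full CTC bijection while preserving both the equivariance and the $G$-block isomorphism of character triples.
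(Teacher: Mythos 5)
Your overall strategy is the same as the paper's: build an auxiliary matching between the non-extremal chains on the two sides, and then run an alternating-path (equivalently, iterated $\Pi^{-1}\circ\Omega$) argument to correct the CTC bijection $\Omega$ so that it carries the orbits over $\{U\}$ onto the orbits over $\{U<D\}$, which is exactly the iAM bijection. Note that your sets $\C^d_\mathcal{F}(B,U)_\pm$ coincide with the paper's $\mathcal{J}^d_\pm$, so the two auxiliary matchings live on the same sets; the difference is in how they are built. The paper's $\Pi$ pairs a chain $\sigma$ with the chain obtained by appending or deleting a defect group at the \emph{top} of $\sigma$, and produces the character-level correspondence by applying Conjecture \ref{conj:iAM} directly to the proper stabiliser $G_\sigma$ (available by minimality of the counterexample). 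You instead fibre over the \emph{second} term $Q$ of the chain and invoke the full inductive statement of Theorem \ref{thm:CTC for maximal defect follows from iAM} for $\n_G(Q)$. Both routes are viable, and your bipartite-graph formulation of the cancellation step is a clean repackaging of the paper's counting-plus-iteration argument.

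There are, however, two gaps. First, ``iAM holds for $\n_G(Q)$ by induction, so Theorem \ref{thm:CTC for maximal defect follows from iAM} applies to these normalisers'' conflates hypothesis and conclusion: the hypothesis of Theorem \ref{thm:CTC for maximal defect follows from iAM} is the inductive AM condition for \emph{covering groups of the simple groups involved} in $\n_G(Q)$, and your induction hypothesis (iAM for groups of smaller order) does not obviously supply this, since such covering groups need not be subquotients of $G$ nor have order smaller than $|G|$. To make your route work you must carry the \emph{conclusion} of Theorem \ref{thm:CTC for maximal defect follows from iAM} for proper normalisers as part of a simultaneous induction (iAM for $G'$ and the $\C^d(B',U')_\pm$-bijection for $G'$, proved in that order at each stage), or else argue as the paper does and apply Conjecture \ref{conj:iAM} directly to the chain stabilisers. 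Second, your treatment of the reduction to $\O_p(G)\leq\z(G)$ is not right as stated: invoking the Navarro--Sp\"ath reduction to quasi-simple groups does not mesh with your induction, because the normalisers $\n_G(Q)$ occurring in the inductive step are not quasi-simple, so you cannot restrict attention to quasi-simple $G$. The paper avoids this by taking a minimal counterexample with respect to $|G:\z(G)|$ and quoting \cite[Proposition 7.4]{Nav-Spa14I} (its Lemma \ref{lem:Minimal counterexample to iAM, central O_p}) to conclude that $\O_p(G)$ is central in such a counterexample; you should do the same rather than detour through the reduction theorem.
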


The structure of a minimal counterexample $G$ to Conjecture \ref{conj:iAM} has been studied in \cite[Section 7]{Nav-Spa14I}. In particular, according to \cite[Proposition 7.4]{Nav-Spa14I} we know that $\O_p(G)$ is contained in the centre of $G$.

\begin{lem}
\label{lem:Minimal counterexample to iAM, central O_p}
Let $G\unlhd A$ be a minimal counterexample to Conjecture \ref{conj:iAM} with respect to $|G:\z(G)|$. Then $\O_p(G)\leq\z(G)$.
\end{lem}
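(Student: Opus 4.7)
The approach is the standard Fong--Reynolds / Clifford-theoretic reduction: assuming $V := \O_p(G) \not\leq \z(G)$, I will produce a proper subgroup $T < G$ containing $\z(G)$ and a block of $T$ for which Conjecture \ref{conj:iAM} transfers to the failing case in $G$, so that minimality of $(G,A)$ with respect to $|G:\z(G)|$ gives a contradiction.

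First I pick a $p$-block $B$ of $G$ witnessing the failure of Conjecture \ref{conj:iAM}, with defect group $D$; recall $V \leq D$ since $\O_p(G)$ sits inside every defect group. Next I choose $\vartheta \in \irr_0(B)$ and a character $\lambda \in \irr(V)$ lying under $\vartheta$, and argue that $\lambda$ can be chosen with $T := G_\lambda \lneq G$. The point is that if every such $\lambda$ were $G$-invariant, then a Clifford-theoretic analysis (along the lines of \cite[Section 7]{Nav-Spa14I}) forces the action of $V/(V\cap\z(G))$ on $B$ to be trivial, contradicting $V \not\leq \z(G)$ together with the fact that $V$ acts faithfully on itself via conjugation from $G$.

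With such a non-$G$-invariant $\lambda$ in hand, I apply Fong--Reynolds \cite[Theorem 9.14]{Nav98}: choosing $D$ inside $T$ (possible as $D$ centralizes $V$ down to $\z(V)$ and standard conjugation arguments apply), there is a unique block $B_\lambda$ of $T$ with $B_\lambda^G = B$ and defect group $D$, and induction gives an $\n_A(V)_{B_\lambda}$-equivariant height-preserving bijection $\irr_0(B_\lambda) \to \irr_0(B \mid \lambda)$. Applying the same Fong--Reynolds transfer at the local level $\n_G(D)$ produces a block $b_\lambda$ of $\n_T(D)$ corresponding to the Brauer correspondent $b$ of $B$ in $\n_G(D)$. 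Using \cite[Lemma 3.8]{Spa17} and the compatibility of Clifford induction with $G$-block isomorphism of character triples, any $\n_{\n_A(V)_\lambda}(D)_{B_\lambda}$-equivariant iAM bijection for $B_\lambda$ in $T$ with the corresponding $\iso{T}$ relations lifts to an $\n_A(D)_B$-equivariant iAM bijection for $B$ in $G$ with the required $\iso{G}$ relations.

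Finally, since $\z(G) \leq T$ and $T \lneq G$, any $z \in \z(G)$ also centralizes $T$ so that $\z(G) \leq \z(T)$, giving
\[
|T : \z(T)| \;\leq\; |T : \z(G)| \;<\; |G : \z(G)|.
\]
By minimality of $(G,A)$, Conjecture \ref{conj:iAM} holds for $T \unlhd \n_A(V)_\lambda$ with respect to $B_\lambda$; transferring back via the Fong--Reynolds correspondences above contradicts the choice of $B$ as a counterexample. The main technical obstacle is the verification in the last transfer that the character-triple isomorphisms $\iso{T}$ furnished by iAM for $B_\lambda$ do produce isomorphisms $\iso{G}$ for $B$; this is essentially the Clifford-compatibility machinery developed for character triples in \cite{Spa17}, but must be threaded through the Fong--Reynolds bijections both globally (on $B$ versus $B_\lambda$) and locally (on $b$ versus $b_\lambda$) simultaneously, which is where the bookkeeping becomes delicate.
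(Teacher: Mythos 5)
First, a point of comparison: the paper does not actually prove this lemma --- it is quoted verbatim from \cite[Proposition 7.4]{Nav-Spa14I}, so your attempt has to be measured against the argument given there. Your Fong--Reynolds reduction to $T=G_\lambda$ is indeed the first half of that argument, but your proposal has a genuine gap at its pivot point: the claim that $V=\O_p(G)\not\leq\z(G)$ forces some $\lambda\in\irr(V)$ lying under a height zero character of $B$ to be non-$G$-invariant. This is false in general. Non-centrality of $V$ only says that $G$ does not centralise $V$; it does not say that $G$ moves any character of $V$. For instance, if $V$ is non-abelian and $G=V\c_G(V)$, then every $\lambda\in\irr(V)$ is $G$-invariant (inner automorphisms fix characters) while $V\not\leq\z(G)$. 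Your parenthetical justification --- that invariance of all such $\lambda$ would force the action of $V/(V\cap\z(G))$ ``on $B$'' to be trivial --- does not yield a contradiction: $V\leq G$ stabilises every block of $G$ anyway, and triviality of an action on characters or blocks never implies centrality of $V$; note also that $V$ never acts faithfully on itself by conjugation, since $\z(V)\neq 1$.

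The upshot is that your argument only disposes of the easy half. After the Fong--Reynolds step one is left precisely with the case in which the character $\lambda\in\irr(Z)$, $Z=\O_p(G)$, lying under $B$ is $G$-invariant, and that is where the real work in \cite[Section 7]{Nav-Spa14I} takes place: one replaces the character triple $(G,Z,\lambda)$ by an isomorphic triple $(G^*,Z^*,\lambda^*)$ in which $Z^*$ is central and cyclic. Since $\z(G^*)$ contains $Z^*$ together with the image of $\z(G)Z/Z$, one gets $|G^*:\z(G^*)|\leq |G:\z(G)Z|<|G:\z(G)|$ exactly when $Z\not\leq\z(G)$, so minimality applies to $G^*$; one then transports the block, a defect group, the action of $A$, and the $\iso{G}$ relations through this replacement back to $G$. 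This central-replacement step --- the reason the minimal counterexample is taken with respect to $|G:\z(G)|$ rather than $|G|$ in the first place --- is entirely absent from your proposal. As written, your argument reduces the lemma to the $G$-invariant case and stops where the actual proof begins.
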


Now, let $G\unlhd A$ be a minimal counterexample as in Lemma \ref{lem:Minimal counterexample to iAM, central O_p} and consider a block $B$ of $G$ for which Conjecture \ref{conj:iAM} fails to hold. If $D$ is a defect group of $B$, then $\O_p(G)<D$ for if $\O_p(G)=D$ then $D$ is normal in $G$ and Conjecture \ref{conj:iAM} follows trivially. Then, for every non-negative integer $d$ we can define the sets $\C^d_0(B,\O_p(G))$ and $\C^d_1(B,\O_p(G))$ consisting of those pairs $(\sigma,\vartheta)$ belonging to $\C^d(B,\O_p(G))$ and such that $\sigma=\{D_0=\O_p(G)\}$ and $\sigma=\{D_0=\O_p(G)<D_1\}$, with $D_1$ a defect group of $B$, respectively. Moreover, set $\mathcal{J}_+^d:=\C^d(B,\O_p(G))_+\setminus \C_0^d(B,\O_p(G))$ and $\mathcal{J}_-^d:=\C^d(B,\O_p(G))_-\setminus \C^d_1(B,\O_p(G))$. Notice that $G$ acts by conjugation on $\mathcal{J}^d_\pm$ and let $\mathcal{J}^d_\pm/G$ denote the corresponding set of $G$-orbits. As usual, for any element $(\sigma,\vartheta)\in\mathcal{J}_\pm^d$, we denote its $G$-orbit by $\overline{(\sigma,\vartheta)}$.

\begin{prop}
\label{prop:Bijection Pi}
Let $G\unlhd A$ be finite groups with $G$ a minimal counterexample to Conjecture \ref{conj:iAM} with respect to $|G:\z(G)|$ and consider a block $B$ of $G$, with defect group $D$, for which the result fails to hold. If $d:=d(B)$, then there exists an $\n_A(D)_B$-equivariant bijection
\[\Pi:\mathcal{J}_+^d/G\to\mathcal{J}_-^d/G\]
such that
\[\left(A_{\sigma,\vartheta},G_\sigma,\vartheta\right)\iso{G}\left(A_{\rho,\chi},G_\rho,\chi\right),\]
for every $(\sigma,\vartheta)\in\mathcal{J}_+^d$ and $(\rho,\chi)\in\Pi(\overline{(\sigma,\vartheta)})$.
\end{prop}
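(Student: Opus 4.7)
The plan is to adapt the construction of the bijection $\Omega_\mathcal{F}$ from the proof of Theorem \ref{thm:CTC for maximal defect follows from iAM} to the minimal-counterexample setting. As a first step, I would identify $\mathcal{J}_\pm^d$ with the subsets $\C^d_\mathcal{F}(B,\O_p(G))_\pm$ introduced in that proof, where $\mathcal{F}$ is the family of $p$-subgroups $Q$ of $G$ with $\O_p(G)<Q^g<D$ for some $g\in G$. Every pair in $\C^d(B,\O_p(G))$ is $G$-conjugate to one whose underlying chain is contained in $D$, so a direct case analysis on chain lengths shows that pairs in $\mathcal{J}_+^d$ have chains of even length $\geq 2$, while pairs in $\mathcal{J}_-^d$ have chains of odd length, either $\geq 3$ or of length $1$ with middle term strictly smaller than $D$. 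In each case the second term of the chain lies in $\mathcal{F}$, and conversely every pair in $\C^d_\mathcal{F}(B,\O_p(G))_\pm$ lies outside the excluded sets $\C_0^d$ and $\C_1^d$.

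Granted this identification, $\Pi$ would be built by mimicking the three-step scheme of Lemma \ref{lem:CTC for maximal defect follows from iAM, 1}, Corollary \ref{cor:CTC for maximal defect follows from iAM, 2} and Proposition \ref{prop:CTC for maximal defect follows from iAM, 3}. Namely, for each $\overline{Q}\in\mathcal{F}/G$ and each block $b$ of $\n_G(Q)$ with $b^G=B$ and $d(b)=d$, I would first produce a bijection $\Omega_{b,Q}:\C^d(b,Q)_+/\n_G(Q)\to\C^d(b,Q)_-/\n_G(Q)$ inducing $\n_G(Q)$-block isomorphisms; assemble these into $\Theta_{\overline{Q}}$ exactly as in Corollary \ref{cor:CTC for maximal defect follows from iAM, 2} and Proposition \ref{prop:CTC for maximal defect follows from iAM, 3}; and finally glue across an $\n_A(D)_B$-transversal in $\mathcal{F}/G$ to define $\Pi$. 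The equivariance and $G$-block isomorphism properties would propagate through the gluing verbatim from the original argument, and the weaker $\n_A(D)_B$-equivariance requested in the statement is automatic from the stronger $\n_A(\O_p(G))_B=A_B$-equivariance obtained in that way.

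The main obstacle is supplying the input bijections $\Omega_{b,Q}$, since Theorem \ref{thm:CTC for maximal defect follows from iAM} itself is not available without the iAM condition we are ultimately trying to prove. My plan is to invoke instead the standing hypothesis that CTC-max holds for every finite group, applied to $\n_G(Q)$. Since $Q>\O_p(G)$ forces $\n_G(Q)<G$, and since $\z(G)\leq\z(\n_G(Q))$ gives $|\n_G(Q):\z(\n_G(Q))|<|G:\z(G)|$, the minimality of $G$ also yields Conjecture \ref{conj:iAM} for $\n_G(Q)\unlhd\n_A(Q)$. Combining these two inputs, with Quillen's contractibility argument as in \cite[Lemma 2.3]{Ros22} to reduce to the case $Q=\O_p(\n_G(Q))$, and with the Navarro--Sp\"ath reduction \cite[Theorem 7.1]{Nav-Spa14I} to transfer the iAM input to the quasi-simple groups involved in $\n_G(Q)$ as needed to run the argument of Theorem \ref{thm:CTC for maximal defect follows from iAM} for $\n_G(Q)$ at $Q$, produces the bijection $\Omega_{b,Q}$ with all the required properties. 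The verification that the resulting $\Pi$ satisfies the asserted $\n_A(D)_B$-equivariance and $G$-block isomorphism conditions is then a routine bookkeeping exercise following Proposition \ref{prop:CTC for maximal defect follows from iAM, 3} and the final step of Theorem \ref{thm:CTC for maximal defect follows from iAM}.
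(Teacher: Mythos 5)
Your identification of $\mathcal{J}^d_\pm$ with the sets $\C^d_{\mathcal{F}}(B,\O_p(G))_\pm$ is correct, and decomposing by the second term of the chain is a legitimate strategy, but it is not the paper's, and as written your mechanism for producing the input bijections $\Omega_{b,Q}$ has two concrete gaps. First, you cannot apply the standing hypothesis (the Character Triple Conjecture for maximal defect characters) to $\n_G(Q)$ relative to $Q$: since $\O_p(G)<Q\leq\O_p(\n_G(Q))$ and $Q$ is in general not central in $\n_G(Q)$, the group $\n_G(Q)$ does not satisfy the hypothesis $\O_p\leq\z$ of Conjecture \ref{conj:CTC}, and the paper explicitly warns that the extension of the conjecture to a non-central normal $p$-subgroup is ``not immediate to prove'' and is deferred to \cite{Ros-Reduction_CTC}; Quillen's contractibility argument \cite[Lemma 2.3]{Ros22} only disposes of a \emph{central} $Z<\O_p$. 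Second, \cite[Theorem 7.1]{Nav-Spa14I} reduces the inductive Alperin--McKay condition \emph{to} quasi-simple groups; knowing Conjecture \ref{conj:iAM} for $\n_G(Q)\unlhd\n_A(Q)$ (which minimality does give you) does not let you ``transfer the iAM input to the quasi-simple groups involved in $\n_G(Q)$'', so you cannot rerun Theorem \ref{thm:CTC for maximal defect follows from iAM} for $\n_G(Q)$ as you describe. A repair exists --- redo the induction of that theorem with the endpoint (Alperin--McKay) bijections at every level supplied directly by minimality, since every stabiliser $G_\sigma$ occurring in the recursion is proper in $G$, contains $\z(G)$, and hence satisfies Conjecture \ref{conj:iAM} --- but that is the substance of the argument, not routine bookkeeping.

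The paper's own proof is different and shorter: it pairs chains by adding or deleting a defect group at the \emph{top}. For $\sigma\in\wh{\mathcal{J}}^d_+$ one shows $D(\sigma)\leq D^g\leq G_\sigma$ for suitable $g\in G$, deletes the final term if it is a defect group of $B$ and otherwise appends $D^g$; matching characters across this move is exactly an Alperin--McKay bijection $\irr^d(b)\to\irr^d(c)$ between a block $b$ of $G_\sigma$ and its Brauer correspondent $c$ in $G_\rho=\n_{G_\sigma}(D^g)$, which minimality supplies because $G_\sigma<G$, and \cite[Lemma 2.11]{Ros22} upgrades the resulting $\iso{G_\sigma}$ to $\iso{G}$. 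This uses Conjecture \ref{conj:iAM} for each $G_\sigma$ exactly once and entirely sidesteps the non-central-$\O_p$ issue that your bottom-up recursion runs into.
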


\begin{proof}
Define the set $\wh{\mathcal{J}}_\pm^d$ of $p$-chains $\sigma$ of $G$ that start with $\O_p(G)$ and for which there exists a character $\vartheta\in\irr(G_\sigma)$ such that $(\sigma,\vartheta)\in\mathcal{J}_\pm^d$. Denote by $\wh{\mathcal{J}}_\pm^d/G$ the corresponding set of $G$-orbits and by $\overline{\sigma}$ the $G$-orbit of $\sigma\in\wh{\mathcal{J}}_\pm^d$. Notice that, if $\sigma\in\wh{\mathcal{J}}_\pm^d$ has final term $D(\sigma)$, then there exists $g\in G$ such that
\[D(\sigma)\leq D^g\leq G_\sigma\]
and $D^g$ is a defect group of some block of $G_\sigma$. In fact, if $(\sigma,\vartheta)\in\mathcal{J}_\pm^d$ and $Q$ is a defect group of $\bl(\vartheta)$, then $D(\sigma)\leq \O_p(G_\sigma)\leq Q$ according to \cite[Theorem 4.8]{Nav98} while \cite[Lemma 4.13]{Nav98} implies that there exists $g\in G$ such that $Q\leq D^g$. Furthermore, if $f$ denotes the defect of the block $\bl(\vartheta)$, then $d\leq f$ by \cite[Theorem 4.6]{Nav98} and hence we have $d\leq f\leq d(\bl(\vartheta)^G)=d(B)=:d$. This shows that $D^g=Q\leq G_\sigma$ and thus $D(\sigma)\leq D^g\leq G_\sigma$ as claimed.

Next, we define an $\n_A(D)_B$-equivariant bijection
\[\wh{\Pi}:\wh{\mathcal{J}}_+^d/G\to\wh{\mathcal{J}}_-^d/G\]
by sending the $G$-orbit of the $p$-chain $\sigma$ to the $G$-orbit of the $p$-chain $\rho$ obtained by deleting the final term $D(\sigma)$ if $D(\sigma)$ is a defect group of $B$. If $D(\sigma)$ is not a defect group of $B$, then the above discussion implies that there exists $g\in G$ such that $D(\sigma)<D^g$ and $D^g$ is a defect group of a block of the stabiliser $G_\sigma$. In this case, we define $\wh{\Pi}$ by sending the $G$-orbit of $\sigma$ to the $G$-orbit of the $p$-chain $\rho$ obtained by adding the term $D^g$ at the end of the $p$-chain $\sigma$. Notice that the above definition does not depend on the choice of $D^g$, but only on its $G_\sigma$-conjugacy class, nor on the representative $\sigma$ in $\overline{\sigma}$. Furthermore, as $D^g\leq G_\sigma$, we deduce that the map sends normal $p$-chains to normal $p$-chains. To conclude that $\wh{\Pi}$ is well-defined we need to check that, for every $\rho\in\wh{\Pi}(\overline{\sigma})$, there exists $\chi\in G_\rho$ such that $(\rho,\chi)\in\mathcal{J}_-^d$. Without loss of generality we may assume that $\rho$ is the $p$-chain obtain from $\sigma$ by adding $D$ as a final term and that, if $(\sigma,\vartheta)\in\mathcal{J}^d_+$, the block $b:=\bl(\vartheta)$ has defect group $D$. Notice also that by the definition of the sets $\mathcal{J}_\pm^d$, since we are excluding the $p$-chain $\{D_0=\O_p(G)\}$, we get $G_\sigma<G$ because the last term of $\sigma$ properly contains $\O_p(G)$. In particular, we deduce that $|G_\sigma:\z(G_\sigma)|<|G:\z(G)|$ and thus $G_\sigma$ satisfies Conjecture \ref{conj:iAM} by the minimality of $G$. Then, if $c$ is the Brauer correspondent of $\bl(\vartheta)$ in $\n_{G_\sigma}(D)=G_\rho$, then there exists an $A_\rho$-equivariant bijection
\[\Pi_{\sigma,b}:\irr^d(b)\to\irr^d(c)\]
such that
\[\left(A_{\sigma,\vartheta},G_\sigma,\vartheta\right)\iso{G_\sigma}\left(A_{\rho,\vartheta},G_\rho,\Pi_\sigma(\vartheta)\right),\]
for every $\vartheta\in\irr^d(b)$. Noticing that $\c_{A_{\sigma,\vartheta}\cdot G}(D)\leq A_{\sigma,\vartheta}$ and applying \cite[Lemma 2.11]{Ros22} we can use the above $G_\sigma$-block isomorphism of character triples to get 
\[\left(A_{\sigma,\vartheta},G_\sigma,\vartheta\right)\iso{G}\left(A_{\rho,\vartheta},G_\rho,\Pi_\sigma(\vartheta)\right).\]
In particular, for $\chi:=\Pi_{\sigma,b}(\vartheta)$, we have $(\rho,\chi)\in\mathcal{J}_-^d$ and so $\wh{\Pi}$ is well-defined as explained above. Finally, we use the bijections $\wh{\Pi}$ and $\Pi_{\sigma,b}$ to define an $\n_A(D)_B$-equivariant bijection $\Pi:\mathcal{J}_+^d/G\to\mathcal{J}_-^d/G$ as required in the statement by sending the $G$-orbit of $(\sigma,\vartheta)$ to the $G$-orbit of the pair $(\rho,\chi)$ constructed above. 
\end{proof}

We can now prove Theorem \ref{thm:CTC-max implies iAM}.

\begin{proof}[Proof of Theorem \ref{thm:CTC-max implies iAM}]
Let $G\unlhd A$ be finite groups and assume that $G$ is a minimal counterexample to Conjecture \ref{conj:iAM} with respect to $|G:\z(G)|$. Let $B$ be a block of $G$ with defect group $D$ and Brauer correspondent $b$ in $\n_G(D)$ for which the result fails to holds. 
By Lemma \ref{lem:Minimal counterexample to iAM, central O_p} and the discussion preceding it we know that $\O_p(G)\leq \z(G)$ and that $\O_p(G)<D$. Then, since we are assuming that the Character Triple Conjecture holds for the non-negative integer $d=d(B)$ at the prime $p$, we can find an $A_B$-equivariant bijection
\[\Omega:\C^d(B,\O_p(G))_+/G\to \C^d(B,\O_p(G))_-/G,\]
such that
\[\left(A_\vartheta,G,\vartheta\right)\iso{G}\left(\n_A(D)_\vartheta,\n_G(D),\Omega(\vartheta)\right),\]
for every $(\sigma,\vartheta)\in\C^d(B,\O_p(G))_+$ and $(\rho,\chi)\in\Omega(\overline{(\sigma,\vartheta)})$. Consider the sets $\C^d_0(B,\O_p(G))$ and $\C^d_1(B,\O_p(G))$ defined before Proposition \ref{prop:Bijection Pi} and notice that $\C^d_0(B,\O_p(G))/G$ is the set of $G$-orbits of pairs $(\sigma,\vartheta)$ where $\sigma=\{D_0=\O_p(G)\}$ and $\vartheta\in\irr_0(B)$ while $\C^d_1(B,\O_p(G))/G$ is the set of $G$-orbits of pairs $(\rho,\chi)$ with $\rho=\{D_0=\O_p(G)<D_1=D\}$ and $\chi\in\irr_0(b)$. Suppose that the bijection $\Omega$ maps $\C_0^d(B,\O_p(G))/G$ onto $\C^d_1(B,\O_p(G))/G$. If the $G$-orbit of $(\sigma,\vartheta)$ is mapped to that of $(\rho,\chi)$, then we write $\chi:=\Theta(\vartheta)$ and obtain an $\n_A(D)_B$-equivariant bijection 
\[\Theta:\irr_0(B)\to \irr_0(b)\]
such that
\[\left(A_\vartheta,G,\vartheta\right)\iso{G}\left(\n_A(D)_\vartheta,\n_G(D),\Theta(\vartheta)\right),\]
for every $\vartheta\in\irr_0(B)$ as required by Conjecture \ref{conj:iAM}. This contradicts our choice of $G$ and thus the image of $\C_0^d(B,\O_p(G))/G$ under $\Omega$ cannot coincide with $\C^d_1(B,\O_p(G))/G$. However, if $\Pi$ is the bijection given by Proposition \ref{prop:Bijection Pi}, then we get
\begin{align*}
|\C^d_0(B,\O_p(G))/G|&=|\C^d(B,\O_p(G))_+/G|-|\mathcal{J}_+^d/G|
\\
&=|\Omega(\C^d(B,\O_p(G))_+/G)|-|\Pi(\mathcal{J}_+^d/G)|
\\
&=|\C^d(B,\O_p(G))_-/G|-|\mathcal{J}_-^d/G|
\\
&=|\C^d_1(B,\O_p(G))/G|
\end{align*}
and therefore there exists some element $(\sigma_0,\vartheta_0)$ of $\C^d_0(B,\O_p(G))$ whose $G$-orbit is mapped via $\Omega$ outside the set $\C^d_1(B,\O_p(G))/G$. Now, we proceed as follows: noticing that $\Omega(\overline{(\sigma_0,\vartheta_0)})$ belongs to $\mathcal{J}_-^d/G$, we can apply the inverse of the bijection $\Pi$ and define
\[\overline{(\sigma_1,\vartheta_1)}:=\Pi^{-1}\left(\Omega\left(\overline{(\sigma_0,\vartheta_0)}\right)\right)\]
an element of $\mathcal{J}_+^d/G\subseteq\C^d(B,\O_p(G))_+/G$. We can apply $\Omega$ to $\overline{(\sigma_1,\vartheta_1)}$. If $\Omega(\overline{(\sigma_1,\vartheta_1)})$ belongs to $\C^d_1(B,\O_p(G))/G$, then we stop. Otherwise, as before, the element $\Omega(\overline{(\sigma_1,\vartheta_1)})$ belongs to $\mathcal{J}_-^d/G$ and we define
\[\overline{(\sigma_2,\vartheta_2)}:=\Pi^{-1}\left(\Omega\left(\overline{(\sigma_1,\vartheta_1)}\right)\right).\]
Proceeding this way, for $i\geq 1$, we define a sequence of elements of $\C^d(B,\O_p(G))_+/G$ by setting
\[\overline{(\sigma_i,\vartheta_i)}:=\Pi^{-1}\left(\Omega\left(\overline{(\sigma_{i-1},\vartheta_{i-1})}\right)\right),\]
if $\Omega(\overline{(\sigma_{i-1},\vartheta_{i-1})})\nin\C^d_1(B,\O_p(G))/G$. It is important to observe that, for every $i\geq 1$, the pair $(\sigma_i,\vartheta_i)$ does not belong to $\C^d_0(B,\O_p(G))$ and satisfies the condition
\begin{equation}
\label{eq:thm CTC implies iAM 1}
\left(A_{\sigma_0},G_{\sigma_0},\vartheta_0\right)\iso{G}\left(A_{\sigma_i},G_{\sigma_i},\vartheta_i\right).
\end{equation}
Next, we claim that there exists some integer $n\geq 1$ such that $\Omega(\overline{(\sigma_n,\vartheta_n)})\in\C^d_1(B,\O_p(G))/G$. Assume for the sake of contradiction that this is not the case. Then the set 
\[\mathcal{S}:=\left\lbrace (\Pi^{-1}\circ\Omega)^i\left(\overline{(\sigma_0,\vartheta_0)}\right)\enspace\middle|\enspace i\geq 0\right\rbrace\subseteq \C^d(B,\O_p(G))_+/G\]
is well-defined and its image under $\Omega$ is contained in $\mathcal{J}_-^d/G$. If we apply $\Pi^{-1}$ to $\Omega(\mathcal{S})$, then we obtain a subset of $\mathcal{S}$. Equivalently, the map $\Pi^{-1}\circ\Omega$ maps $\mathcal{S}$ to itself. Therefore, since $\mathcal{S}$ is finite, we must have
\[\Pi^{-1}\circ \Omega(\mathcal{S})=\mathcal{S}.\]
However, noticing that $\overline{(\sigma_0,\vartheta_0)}\in\mathcal{S}\cap \C^d_0(B,\O_p(G))/G$ and recalling that from elementary set theory the image of the intersection of two sets under an injective map coincides with the intersection of the images of such sets, we deduce that
\begin{align*}
|\mathcal{S}|&=|\Omega(\mathcal{S})|
\\
&=|\Omega(\mathcal{S})\cap \mathcal{J}_-^d/G|
\\
&=|\Pi^{-1}\left(\Omega(\mathcal{S})\cap \mathcal{J}_-^d/G\right)|
\\
&=|\Pi^{-1}(\Omega(\mathcal{S}))\cap \Pi(\mathcal{J}_-^d/G)|
\\
&=|\mathcal{S}\cap \mathcal{J}_+^d/G|
\\
&\leq |\mathcal{S}|-1,
\end{align*}
a contradiction. This proves our claim. Now, since $\C^d_1(B,\O_p(G))$ is $\n_A(D)_B$-stable and $\Omega$ and $\Pi$ are $\n_A(D)_B$-equivariant, the pairs $(\sigma_0,\vartheta_0)$ and $(\sigma_n,\vartheta_n)$ are not $\n_A(D)_B$-conjugate. Then, we can find an $\n_A(D)_B$-transversal $\mathcal{T}$ in $\C^d(B,\O_p(G))_+/G$ containing $\overline{(\sigma_0,\vartheta_0)}$ and $\overline{(\sigma_n,\vartheta_n)}$. We define a new $\n_A(D)_B$-equivariant bijection $\Omega':\C^d(B,\O_p(G))_+/G\to\C^d(B,\O_p(G))_-/G$ by setting
\[\Omega'\left(\overline{(\sigma,\vartheta)}^x\right):=\begin{cases}
\Omega\left(\overline{(\sigma,\vartheta)}^x\right), &
\text{ if }\overline{(\sigma,\vartheta)}\in \mathcal{T}\setminus \left\lbrace \overline{(\sigma_0,\vartheta_0)},\overline{(\sigma_n,\vartheta_n)}\right\rbrace
\\
\Omega\left(\overline{(\sigma_n,\vartheta_n)}^x\right), &
\text{ if }\overline{(\sigma,\vartheta)}=\overline{(\sigma_0,\vartheta_0)}
\\
\Omega\left(\overline{(\sigma_0,\vartheta_0)}^x\right), &
\text{ if }\overline{(\sigma,\vartheta)}=\overline{(\sigma_n,\vartheta_n)}
\end{cases},\]
for every $\overline{(\sigma,\vartheta)}\in\mathcal{T}$ and $x\in\n_A(D)_B$. Using \eqref{eq:thm CTC implies iAM 1} and because the $G$-block isomorphism is an equivalence relation according to \cite[Lemma 3.8 (a)]{Spa17}, we deduce that $\Omega'$ satisfies the requirements of the Character Triple Conjecture. Moreover, since by construction $(\sigma_n,\vartheta_n)\nin\C^d_0(B)$, the definition of $\Omega'$ coincides with that of $\Omega$ on the set $\C^d_0(B,\O_p(G))/G$ apart from the value of our "bad" element $(\sigma_0,\vartheta_0)$ which is now mapped to $\C^d_1(B,\O_p(G))/G$ under $\Omega'$. Arguing in this way we can redefine the map $\Omega$ for all such bad elements in such a way that the newly defined $\Omega$ maps $\C^d_0(B,\O_p(G))/G$ to $\C^d_1(B,\O_p(G))$. As explained at the beginning of the proof this implies that Conjecture \ref{conj:iAM} holds for $B$. This contradicts our choice of a minimal counterexample and the proof is now complete.
\end{proof}

\section{The block-free form of the Character Triple Conjecture}
\label{sec:Block-free}

In this section, we consider a block-free analogue of Theorem \ref{thm:CTC for maximal defect follows from iAM}. For this purpose, given a non-negative integer $d$ and a normal $p$-subgroup $U$ of $G$, we denote by $\C^d(G,U)_\pm$ the union of all sets $\C^d(B,U)_\pm$ for $B$ a block of $G$. Equivalently, $\C^d(G,U)_\pm$ is the set of pairs $(\sigma,\vartheta)$ where $\sigma$ is a $p$-chain of $G$ starting with $U$ and satisfying $(-1)^{|\sigma|}=\pm 1$, and $\vartheta$ is an irreducible character of the stabiliser $G_\sigma$ with defect $d(\vartheta)=d$. Moreover, observe that by removing the condition \cite[Remark 3.7 (iv)]{Spa17} from the definition of $G$-block isomorphism, we obtain a weaker isomorphism of character triples. This was called $G$-central isomorphism in \cite[Definition 3.3.4]{Ros-Thesis}. With these definitions at hand, a block-free form of the Character Triple Conjecture was introduced in \cite[Conjecture 3.5.5]{Ros-Thesis}. The case of maximal defect characters, which in this context coincide with characters of $p'$-degree, can then be deduced by assuming the inductive McKay condition form \cite{Isa-Mal-Nav07}. Below we use a reformulation of this condition in the spirit of Conjecture \ref{conj:iAM}. We refer the reader to \cite[Conjecture A]{Ros-iMcK} for a precise statement.

\begin{theo}
\label{thm:CTC non-blockwise for maximal defect follows from iMcK}
Let $G$ be a finite group, consider a prime $p$, and suppose that the inductive McKay condition (as stated in \cite[Conjecture A]{Ros-iMcK}) holds at the prime $p$ for the universal covering group of every non-abelian finite simple group involved in $G$. Let $G\unlhd A$ and $U\unlhd G$ a $p$-subgroup of order $|U|<|G|_p=p^d$. Then, there exists an $\n_A(U)$-equivariant bijection
\[\Omega_{U}:\C^d(G,U)_+/G\to \C^d(G,U)_-/G\]
such that
\[\left(A_{\sigma,\vartheta},G_\sigma,\vartheta\right)\isoc{G}\left(A_{\rho,\chi},G_\rho,\chi\right)\]
for every $(\sigma,\vartheta)\in\C^d(G,U)_+$ and $(\rho,\chi)\in\Omega_U(\overline{(\sigma,\vartheta)})$.
\end{theo}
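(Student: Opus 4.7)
The plan is to mirror the proof of Theorem \ref{thm:CTC for maximal defect follows from iAM} in Section \ref{sec:iAM implies CTC}, working throughout with the block-free sets $\C^d(G,U)_\pm$ and with the weaker notion of $G$-central isomorphism $\isoc{G}$ in place of the $G$-block isomorphism $\iso{G}$. Since the integer $d=\log_p|G|_p$ is now the maximum possible defect of any irreducible character, characters of defect $d$ are precisely the characters of $p'$-degree, and the role played in the blockwise argument by a defect group of $B$ will here be played by a Sylow $p$-subgroup $P$ of $G$.

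First, I would proceed by induction on $|G|$ and establish block-free analogues of Lemma \ref{lem:CTC for maximal defect follows from iAM, 1}, Corollary \ref{cor:CTC for maximal defect follows from iAM, 2} and Proposition \ref{prop:CTC for maximal defect follows from iAM, 3}. For a $p$-subgroup $Q$ with $U<Q<P$ one has $U<\n_G(Q)<G$ and $|\n_G(Q)|_p=|G|_p=p^d$ (because any Sylow $p$-subgroup containing $Q$ normalises it), so the inductive hypothesis applies to the triple $Q\unlhd\n_G(Q)\unlhd\n_A(Q)$ with the same integer $d$. Taking a disjoint union over all blocks of $\n_G(Q)$ and applying the $\sigma\mapsto\sigma_U$ truncation produces an $\n_A(U,Q)$-equivariant bijection on $\C^d_Q(G,U)_\pm/\n_G(Q)$, which in turn produces a $G\n_A(U,Q)$-equivariant bijection on the $G$-orbit part $\C^d_{\overline{Q}}(G,U)_\pm/G$. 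The promotion of an $\n_G(Q)$-central isomorphism to a $G$-central one uses the same centraliser argument as in \cite[Lemma 2.11]{Ros22}, which remains valid once the block condition is dropped from the definition of isomorphism.

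Second, the remaining pairs are those whose chain is $G$-conjugate either to $\sigma_+=\{U\}$ or to $\sigma_-=\{U<P\}$. The set $\mathcal{G}_+$ corresponds bijectively to $\irr^d(G)$ and the set $\mathcal{G}_-$ to $\irr^d(\n_G(P))$, the sets matched by the McKay Conjecture. Here I would invoke the block-free reduction theorem for McKay in the formulation of \cite[Conjecture A]{Ros-iMcK}, the natural analogue of \cite[Theorem 7.1]{Nav-Spa14I}: because the inductive McKay condition is assumed for the universal covering group of every non-abelian simple group involved in $G$, we obtain an $\n_A(U,P)$-equivariant bijection $\Pi:\irr^d(G)\to\irr^d(\n_G(P))$ with
\[\left(A_\vartheta,G,\vartheta\right)\isoc{G}\left(\n_A(P)_\vartheta,\n_G(P),\Pi(\vartheta)\right)\]
for every $\vartheta\in\irr^d(G)$. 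A Frattini argument gives $\n_A(U)=G\n_A(U,P)$, and one assembles $\Omega_U$ out of the bijection on the non-extremal part (previous paragraph) together with $\Pi$ on the extremal part, exactly as in the final paragraph of the proof of Theorem \ref{thm:CTC for maximal defect follows from iAM}.

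The main obstacle is the matching of the extremal pairs, since it requires the block-free reduction of McKay in a form producing $G$-central isomorphisms of character triples; I would rely on \cite[Conjecture A]{Ros-iMcK} for this, which is cited explicitly in the hypothesis of the theorem. The rest of the argument is a systematic transcription of Section \ref{sec:iAM implies CTC} with the three substitutions: $\C^d(B,\cdot)\rightsquigarrow\C^d(G,\cdot)$, $\iso{G}\rightsquigarrow\isoc{G}$, and defect group $\rightsquigarrow$ Sylow $p$-subgroup, using that $\isoc{G}$ is an equivalence relation compatible with conjugation and induction from a normal subgroup in the same way that $\iso{G}$ is.
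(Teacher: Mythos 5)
Your proposal follows the paper's proof essentially verbatim: the same transcription of Lemma \ref{lem:CTC for maximal defect follows from iAM, 1}, Corollary \ref{cor:CTC for maximal defect follows from iAM, 2} and Proposition \ref{prop:CTC for maximal defect follows from iAM, 3} with the defect group replaced by a Sylow $p$-subgroup $P$ and $\iso{G}$ by $\isoc{G}$, followed by the matching of the extremal chains $\{U\}$ and $\{U<P\}$ via the reduction theorem for the inductive McKay condition (the paper invokes \cite[Theorem B]{Ros-iMcK} here, the block-free analogue of \cite[Theorem 7.1]{Nav-Spa14I}; \cite[Conjecture A]{Ros-iMcK}, which you cite, is only the statement of the condition). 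One slip worth correcting: your parenthetical justification that $|\n_G(Q)|_p=|G|_p$ ``because any Sylow $p$-subgroup containing $Q$ normalises it'' is false --- a Sylow $p$-subgroup need not normalise a non-normal subgroup of itself. The step survives regardless: if $|\n_G(Q)|_p<p^d$ then no subgroup of $\n_G(Q)$ admits an irreducible character of defect $d$, so $\C^d_Q(G,U)_\pm$ is empty and there is nothing to match, while otherwise the inductive hypothesis applies to $Q\unlhd\n_G(Q)\unlhd\n_A(Q)$ with the same integer $d$ exactly as you intend.
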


\begin{proof}
By replacing the defect group $D$ of $B$ with a Sylow $p$-subgroup $P$ of $G$ in the arguments used to prove Lemma \ref{lem:CTC for maximal defect follows from iAM, 1}, Corollary \ref{cor:CTC for maximal defect follows from iAM, 2}, and Proposition \ref{prop:CTC for maximal defect follows from iAM, 3}, we obtain $G\n_A(U,Q)$-equivariant bijections, for $U<Q<P$ a $p$-subgroup,
\[\Theta_{\overline{Q}}:\C^d_{\overline{Q}}(G,U)_+/G\to \C^d_{\overline{Q}}(G,U)_-/G\]
such that
\[\left(A_{\sigma,\vartheta},G_\sigma,\vartheta\right)\isoc{G}\left(A_{\rho,\chi},G_\rho,\chi \right)\]
for every $(\sigma,\vartheta)\in\C^d_{\overline{Q}}(G,U)_+$ and $(\rho,\chi)\in\Theta_{\overline{Q}}(\overline{(\sigma,\vartheta)})$. Then, as in the final step of the proof of Theorem \ref{thm:CTC for maximal defect follows from iAM}, we can combine the bijections $\Theta_{\overline{Q}}$ to obtain an $\n_A(U)$-equivariant bijection
\[\Omega_\mathcal{F}:\C^d_{\mathcal{F}}(G,U)_+/G\to\C^d_{\mathcal{F}}(G,U)_-/G\]
where $\mathcal{F}$ denotes the set of $p$-subgroups $Q$ of $G$ such that $U<Q^g<P$ for some $g\in G$, while $\C^d_\mathcal{F}(G,U)_\pm$ is the set of pairs $(\sigma,\vartheta)\in\C^d(G,U)_\pm$ such that the second term of the $p$-chain $\sigma$ belongs to $\mathcal{F}$. To conclude we define $\mathcal{G}_+$ as the set of pairs $(\sigma_+,\vartheta)$ with $\sigma_+=\{U\}$ and $\vartheta\in\irr_{p'}(G)$, and the set $\mathcal{G}_-$ of pairs $(\sigma_-,\chi)^g$ with $\sigma_-=\{U<P\}$, $\chi\in\irr_{p'}(\n_G(P))$ and $g\in G$. To construct a bijection $\Omega_\mathcal{G}$ that induces $G$-central isomorphisms of character triples between the sets $\mathcal{G}_+/G$ and $\mathcal{G}_-/G$, we now use the hypothesis that the inductive McKay condition holds for the universal covering group of every non-abelian simple group involved in $G$ and apply \cite[Theorem B]{Ros-iMcK}. The bijection $\Omega$ is then constructed using $\Omega_\mathcal{F}$ and $\Omega_\mathcal{G}$. 
\end{proof}

Before proceeding further, we make a remark on the block-free form of Theorem \ref{thm:CTC-max implies iAM}. 

\begin{rmk}
By following the argument used in Section \ref{sec:Converse, CTC implies iAM}, while replacing everywhere the defect group $D$ with a Sylow $p$-subgroup $P$ and Lemma \ref{lem:Minimal counterexample to iAM, central O_p} with \cite[Corollary 4.3]{Ros-iMcK}, one could prove a block-free version of Theorem \ref{thm:CTC-max implies iAM} and hence obtain a converse to Theorem \ref{thm:CTC non-blockwise for maximal defect follows from iMcK}. More precisely, if the block-free version of the Character Triple Conjecture holds for every finite group $G$ at the prime $p$ with respect to $d:=\log_p(|G|_p)$, then the inductive McKay condition (in the formulation given in \cite[Conjecture A]{Ros-iMcK}) holds for every finite group at the prime $p$.
\end{rmk}

While we can obtain the block-free form of the Character Triple Conjecture for the prime $2$ and maximal defect characters as a consequence of Theorem \ref{thm:Main, CTC/DPC-max for p=2}, the above result can be used to handle the remaining odd primes $p$ thanks to the solution of McKay Conjecture recently obtained by Cabanes and Sp\"ath \cite{Cab-Spa-McK}.

\begin{theo}
\label{thm:CTC block-free p=3}
Let $G$ be a finite group and write $|G|_p=p^d$. Then, the block-free form of the Character Triple Conjecture \cite[Conjecture 3.5.5]{Ros-Thesis} holds for $G$ at the prime $p$ and with respect to the defect $d$.
\end{theo}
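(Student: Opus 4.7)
The plan is to derive the statement from Theorem \ref{thm:CTC non-blockwise for maximal defect follows from iMcK} by appealing to the now-complete verification of the inductive McKay condition at every prime.

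First, I would dispose of the prime $p=2$ by invoking Theorem \ref{thm:Main, CTC/DPC-max for p=2} directly. At the maximal defect $d$, only $2$-blocks $B$ of maximal defect $d(B)=d$ contribute pairs to $\C^d(G,U)$, so this set decomposes as the disjoint union of the $\C^d(B,U)$ over all such blocks. The blockwise bijections provided by Theorem \ref{thm:Main, CTC/DPC-max for p=2} patch together into a single $\n_A(U)$-equivariant bijection, and the $G$-block isomorphism of character triples automatically implies the weaker $G$-central isomorphism required by \cite[Conjecture 3.5.5]{Ros-Thesis}.

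For odd primes, I would apply Theorem \ref{thm:CTC non-blockwise for maximal defect follows from iMcK} with $U=\O_p(G)$, which by the usual contractibility reduction (analogous to \cite[Lemma 2.3]{Ros22}) may be assumed central in $G$. In the generic case $|\O_p(G)|<p^d$ the hypotheses of Theorem \ref{thm:CTC non-blockwise for maximal defect follows from iMcK} are met, and the desired $\n_A(\O_p(G))$-equivariant bijection is produced. The degenerate case $|\O_p(G)|=p^d$, in which a Sylow $p$-subgroup is normal and central in $G$, is trivial or vacuous under the conventions of \cite[Conjecture 3.5.5]{Ros-Thesis}, since then no $p$-chain of positive length starts at $\O_p(G)$.

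The decisive hypothesis of Theorem \ref{thm:CTC non-blockwise for maximal defect follows from iMcK} is the inductive McKay condition, in the formulation of \cite[Conjecture A]{Ros-iMcK}, for the universal covering group of every non-abelian finite simple group involved in $G$. Its equivalence with the classical formulation of \cite{Isa-Mal-Nav07} is obtained by an argument parallel to the one indicated earlier for the inductive Alperin--McKay condition (in the spirit of \cite[Proposition 6.8]{Spa17}), and the classical condition itself has now been verified at every prime, thanks to the recent completion of the McKay Conjecture by Cabanes and Sp\"ath \cite{Cab-Spa-McK}. The main care in writing this up is merely to check that the degenerate case $|\O_p(G)|=p^d$ is correctly accommodated by the precise conventions of the block-free Character Triple Conjecture in \cite{Ros-Thesis}; no substantive mathematical obstacle remains.
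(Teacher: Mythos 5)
Your proposal is correct and follows essentially the same route as the paper: the proof is a direct application of Theorem \ref{thm:CTC non-blockwise for maximal defect follows from iMcK}, with its hypothesis discharged by the verification of the inductive McKay condition for all simple groups at all primes, completed by Cabanes--Sp\"ath for type $\mathbf{D}$. The paper treats all primes (including $2$) uniformly in this way and simply lists the relevant literature case by case, so your separate reduction of $p=2$ to Theorem \ref{thm:Main, CTC/DPC-max for p=2} is unnecessary (though valid, and indeed suggested in the paper's own preceding remarks); the one phrasing to tighten is that what \cite{Cab-Spa-McK} supplies is the verification of the inductive condition in the remaining type $\mathbf{D}$ case, not merely the truth of the McKay Conjecture as a counting statement.
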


\begin{proof}
By Theorem \ref{thm:CTC non-blockwise for maximal defect follows from iMcK} it suffices to verify the inductive McKay condition for finite simple groups with respect to the prime $p$. This has been verified for Suzuki and Ree groups \cite[Section 16-17]{Isa-Mal-Nav07}, alternating groups \cite[Theorem 3.1]{Mal08II}, groups of Lie type with exceptional Schur multiplier \cite[Theorem 4.1]{Mal08II}, sporadic groups \cite{Mal08II}, groups of Lie type in defining characteristic \cite[Theorem 1.1]{Spa12}, and groups of Lie type in non-defining characteristic unless of type ${\bf D}$ \cite{Cab-Spa13}, \cite{Cab-Spa17I}, \cite{Cab-Spa17II}, \cite{Cab-Spa19}. The remaining case of groups of Lie type ${\bf D}$ in non-defining characteristic follows from \cite[Theorem 3.1]{Mal-Spa16} and \cite{Cab-Spa-McK} by applying \cite[Theorem 2.4]{Cab-Spa19}.
\end{proof}

As a consequence of the above theorem we obtain the block-free form of the Character Triple Conjecture for every prime $p$ and every finite group $G$ with abelian Sylow $p$-subgroup. 

\begin{cor}
\label{cor:CTC block-free, abelian Sylow 3}
The block-free version of the Character Triple Conjecture holds at the prime $p$ for every finite group with abelian Sylow $p$-subgroups.
\end{cor}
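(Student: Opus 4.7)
I would adapt the strategy of Corollary \ref{cor:Main CTC for abelian 2-blocks} to the block-free setting, with Theorem \ref{thm:CTC block-free p=3} taking the role of Theorem \ref{thm:Main, CTC/DPC-max for p=2} and an abelian Sylow $p$-subgroup $P$ of $G$ taking the role of a defect group. Let $G\unlhd A$ with $G$ having abelian Sylow $p$-subgroup $P$, let $U\unlhd G$ be a $p$-subgroup with $|U|<p^d$, and write $|G|_p=p^D$. If $d=D$, Theorem \ref{thm:CTC block-free p=3} immediately supplies the required $\n_A(U)$-equivariant bijection $\C^d(G,U)_+/G\to\C^d(G,U)_-/G$ inducing $G$-central isomorphisms of character triples. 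Assume therefore $d<D$ and proceed by induction on $|G|$; after the standard reduction $U=\O_p(G)$, the case $P\unlhd G$ is trivial since $\irr(G)=\irr_{p'}(G)$ by Ito's theorem, so we may also assume $P\not\unlhd G$.

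The key observation is that whenever $G_\sigma$ contains a $G$-conjugate of $P$ (which happens, for instance, whenever $\sigma$ has a term $G$-conjugate to $P$), that conjugate is a normal abelian Sylow $p$-subgroup of $G_\sigma$, and Ito's theorem then forces every irreducible character of $G_\sigma$ to have $p'$-degree, hence defect exactly $D$; consequently $\irr^d(G_\sigma)=\emptyset$ for every such $\sigma$. In particular the extremal chain $\sigma_-=\{U<P\}$ contributes nothing to $\C^d(G,U)_-$. The block-free versions of Lemma \ref{lem:CTC for maximal defect follows from iAM, 1}, Corollary \ref{cor:CTC for maximal defect follows from iAM, 2} and Proposition \ref{prop:CTC for maximal defect follows from iAM, 3}, developed during the proof of Theorem \ref{thm:CTC non-blockwise for maximal defect follows from iMcK}, then match all pairs whose chain has second term $Q$ with $U<Q^g<P$ for some $g\in G$, by applying the inductive hypothesis to the proper subgroup $\n_G(Q)<G$ (whose Sylow $p$-subgroups, being subgroups of $P$, are still abelian) at defect $d$ with normal $p$-subgroup $Q$.

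It then remains to pair the residual pairs $(\{U\},\vartheta)\in\C^d(G,U)_+$ with $\vartheta\in\irr^d(G)$. By Brauer's Height Zero Conjecture each such $\vartheta$ belongs to a block $B$ of $G$ with abelian defect group $D_B$ of order $p^d$, and the condition $|U|<p^d$ rules out $D_B\unlhd G$, so $\n_G(D_B)<G$. Applying the inductive hypothesis to $\n_G(D_B)$ at defect $d$ with normal $p$-subgroup $D_B$, together with Brauer's first main theorem and the argument of \cite[Lemma 2.11]{Ros22} to upgrade the resulting $\n_G(D_B)$-central isomorphisms of character triples to $G$-central ones, yields the pairing $(\{U\},\vartheta)\leftrightarrow(\{U<D_B\},\chi)$ required to complete the construction. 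The main difficulty I expect is arranging this extremal pairing so as to be $\n_A(U)$-equivariant simultaneously across all $A$-orbits of blocks of defect $d$ and compatible with the middle-part bijection obtained by induction.
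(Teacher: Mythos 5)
You should first be aware that the paper's own proof is entirely different and much shorter: it applies Theorem \ref{thm:CTC block-free p=3} at the maximal defect $d=\log_p|G|_p$ and then asserts that, by the Ito--Michler theorem, $\C^f(G,U)_\pm$ is \emph{empty} for every $f\neq d$, so that nothing has to be constructed at non-maximal defect. Your proposal implicitly rejects that emptiness claim, and your instinct is defensible at least for the trivial chain $\sigma_+=\{U\}$: Ito--Michler forces $\irr(G)=\irr_{p'}(G)$ only when the abelian Sylow subgroup is \emph{normal}, and otherwise characters of degree divisible by $p$ exist (for $G=A_5\times A_5$ at $p=5$ the character $\chi_5\otimes 1$ has defect $1<2$). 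So the two arguments genuinely part ways: you are trying to prove by hand something the paper disposes of as vacuous.

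The trouble is that your replacement argument does not close up. The partners you need for the leftover pairs $(\{U\},\vartheta)$ with $\vartheta\in\irr^f(G)$ and $f<D$ are pairs $(\{U<Q\},\chi)$ with $|Q|=p^f$ a defect group; but such $Q$ satisfy $U<Q<P$, so these pairs already lie in the middle part $\C^f_{\mathcal F}(G,U)_-$ that you claim to have exhausted by induction --- you cannot use them twice, and the ``compatibility'' issue you flag at the end is not a technicality but the heart of the matter. Worse, the middle-part induction itself breaks for exactly these $Q$: the block-free statement for $(\n_G(Q),Q)$ at defect $f$ is only available (and, in the naive form you would need, only true) when $|Q|<p^f$, whereas for $|Q|\geq p^f$ the two sides $\C^f(\n_G(Q),Q)_\pm$ can have different cardinalities --- for $\n_{A_5\times A_5}(C_5\times 1)=D_{10}\times A_5$ and $f=1$ one finds $4$ classes on the positive side and $0$ on the negative side. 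Hence no bijection $\Omega_{\mathcal F}$ between $\C^f_{\mathcal F}(G,U)_+/G$ and $\C^f_{\mathcal F}(G,U)_-/G$ exists at non-maximal defect, and the three-way decomposition ``trivial chain / middle / Sylow chain'' is the wrong one for $f<D$. A correct treatment of those defects would have to be organised block by block: by Brauer's Height Zero Conjecture all defect-$f$ contributions come from blocks of defect exactly $f$, and for each such block one needs the blockwise statement of Corollary \ref{cor:Main CTC for abelian 2-blocks}, which this paper only establishes at $p=2$. Either you justify the paper's emptiness claim, or you must confront this blockwise reduction; your current sketch does neither.
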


\begin{proof}
Let $G$ be a finite group with abelian Sylow $p$-subgroups and $U$ a normal subgroup of $G$ satisfying $|U|<|G|_p=:p^d$. Then, applying Theorem \ref{thm:CTC block-free p=3}, we get a bijection $\Omega_U^d:\C^d(G,U)_+/G\to \C^d(G,U)_-/G$ satisfying the requirements of the block-free form of the Character Triple Conjecture. On the other hand, by the Ito--Michler theorem, we deduce that $\C^f(G,U)_\pm$ is empty whenever $f\neq d$ and we are done.  
\end{proof}

\bibliographystyle{alpha}
\bibliography{References}

\vspace{1cm}

Mathematisches Forschungsinstitut Oberwolfach
\\
Schwarzwaldstr. 9-11,
\\
77709 Oberwolfach-Walke,
\\
Germany

\textit{Email address:} \href{mailto:damiano.rossi.math@gmail.com}{damiano.rossi.math@gmail.com}

\end{document}